\numberwithin{equation}{section}
\newtheorem{theorem}{Theorem}[section]
\newtheorem{lemma}[theorem]{Lemma}
\theoremstyle{definition}
\begin{document}
\address{Yongming Luo
\newline \indent Institut f\"ur Wissenschaftliches Rechnen, Technische Universit\"at Dresden\indent
\newline \indent  Zellescher Weg 25, 01069 Dresden, Germany.\indent }
\email{yongming.luo@tu-dresden.de}

\newcommand{\diver}{\operatorname{div}}
\newcommand{\lin}{\operatorname{Lin}}
\newcommand{\curl}{\operatorname{curl}}
\newcommand{\ran}{\operatorname{Ran}}
\newcommand{\kernel}{\operatorname{Ker}}
\newcommand{\la}{\langle}
\newcommand{\ra}{\rangle}
\newcommand{\N}{\mathbb{N}}
\newcommand{\R}{\mathbb{R}}
\newcommand{\C}{\mathbb{C}}
\newcommand{\T}{\mathbb{T}}

\newcommand{\ld}{\lambda}
\newcommand{\fai}{\varphi}
\newcommand{\0}{0}
\newcommand{\n}{\mathbf{n}}
\newcommand{\uu}{{\boldsymbol{\mathrm{u}}}}
\newcommand{\UU}{{\boldsymbol{\mathrm{U}}}}
\newcommand{\buu}{\bar{{\boldsymbol{\mathrm{u}}}}}
\newcommand{\ten}{\\[4pt]}
\newcommand{\six}{\\[-3pt]}
\newcommand{\nb}{\nonumber}
\newcommand{\hgamma}{H_{\Gamma}^1(\OO)}
\newcommand{\opert}{O_{\varepsilon,h}}
\newcommand{\barx}{\bar{x}}
\newcommand{\barf}{\bar{f}}
\newcommand{\hatf}{\hat{f}}
\newcommand{\xoneeps}{x_1^{\varepsilon}}
\newcommand{\xh}{x_h}
\newcommand{\scaled}{\nabla_{1,h}}
\newcommand{\scaledb}{\widehat{\nabla}_{1,\gamma}}
\newcommand{\vare}{\varepsilon}
\newcommand{\A}{{\bf{A}}}
\newcommand{\RR}{{\bf{R}}}
\newcommand{\B}{{\bf{B}}}
\newcommand{\CC}{{\bf{C}}}
\newcommand{\D}{{\bf{D}}}
\newcommand{\K}{{\bf{K}}}
\newcommand{\oo}{{\bf{o}}}
\newcommand{\id}{{\bf{Id}}}
\newcommand{\E}{\mathcal{E}}
\newcommand{\ii}{\mathcal{I}}
\newcommand{\sym}{\mathrm{sym}}
\newcommand{\lt}{\left}
\newcommand{\rt}{\right}
\newcommand{\ro}{{\bf{r}}}
\newcommand{\so}{{\bf{s}}}
\newcommand{\e}{{\bf{e}}}
\newcommand{\ww}{{\boldsymbol{\mathrm{w}}}}
\newcommand{\zz}{{\boldsymbol{\mathrm{z}}}}
\newcommand{\U}{{\boldsymbol{\mathrm{U}}}}
\newcommand{\G}{{\boldsymbol{\mathrm{G}}}}
\newcommand{\VV}{{\boldsymbol{\mathrm{V}}}}
\newcommand{\II}{{\boldsymbol{\mathrm{I}}}}
\newcommand{\ZZ}{{\boldsymbol{\mathrm{Z}}}}
\newcommand{\hKK}{{{\bf{K}}}}
\newcommand{\f}{{\bf{f}}}
\newcommand{\g}{{\bf{g}}}
\newcommand{\lkk}{{\bf{k}}}
\newcommand{\tkk}{{\tilde{\bf{k}}}}
\newcommand{\W}{{\boldsymbol{\mathrm{W}}}}
\newcommand{\Y}{{\boldsymbol{\mathrm{Y}}}}
\newcommand{\EE}{{\boldsymbol{\mathrm{E}}}}
\newcommand{\F}{{\bf{F}}}
\newcommand{\spacev}{\mathcal{V}}
\newcommand{\spacevg}{\mathcal{V}^{\gamma}(\Omega\times S)}
\newcommand{\spacevb}{\bar{\mathcal{V}}^{\gamma}(\Omega\times S)}
\newcommand{\spaces}{\mathcal{S}}
\newcommand{\spacesg}{\mathcal{S}^{\gamma}(\Omega\times S)}
\newcommand{\spacesb}{\bar{\mathcal{S}}^{\gamma}(\Omega\times S)}
\newcommand{\skews}{H^1_{\barx,\mathrm{skew}}}
\newcommand{\kk}{\mathcal{K}}
\newcommand{\OO}{O}
\newcommand{\bhe}{{\bf{B}}_{\vare,h}}
\newcommand{\pp}{{\mathbb{P}}}
\newcommand{\ff}{{\mathcal{F}}}
\newcommand{\mWk}{{\mathcal{W}}^{k,2}(\Omega)}
\newcommand{\mWa}{{\mathcal{W}}^{1,2}(\Omega)}
\newcommand{\mWb}{{\mathcal{W}}^{2,2}(\Omega)}
\newcommand{\twos}{\xrightharpoonup{2}}
\newcommand{\twoss}{\xrightarrow{2}}
\newcommand{\bw}{\bar{w}}
\newcommand{\bz}{\bar{{\bf{z}}}}
\newcommand{\tw}{{W}}
\newcommand{\tr}{{{\bf{R}}}}
\newcommand{\tz}{{{\bf{Z}}}}
\newcommand{\lo}{{{\bf{o}}}}
\newcommand{\hoo}{H^1_{00}(0,L)}
\newcommand{\ho}{H^1_{0}(0,L)}
\newcommand{\hotwo}{H^1_{0}(0,L;\R^2)}
\newcommand{\hooo}{H^1_{00}(0,L;\R^2)}
\newcommand{\hhooo}{H^1_{00}(0,1;\R^2)}
\newcommand{\dsp}{d_{S}^{\bot}(\barx)}
\newcommand{\LB}{{\bf{\Lambda}}}
\newcommand{\LL}{\mathbb{L}}
\newcommand{\mL}{\mathcal{L}}
\newcommand{\mhL}{\widehat{\mathcal{L}}}
\newcommand{\loc}{\mathrm{loc}}
\newcommand{\tqq}{\mathcal{Q}^{*}}
\newcommand{\tii}{\mathcal{I}^{*}}
\newcommand{\Mts}{\mathbb{M}}
\newcommand{\pot}{\mathrm{pot}}
\newcommand{\tU}{{\widehat{\bf{U}}}}
\newcommand{\tVV}{{\widehat{\bf{V}}}}
\newcommand{\pt}{\partial}
\newcommand{\bg}{\Big}
\newcommand{\hA}{\widehat{{\bf{A}}}}
\newcommand{\hB}{\widehat{{\bf{B}}}}
\newcommand{\hCC}{\widehat{{\bf{C}}}}
\newcommand{\hD}{\widehat{{\bf{D}}}}
\newcommand{\fder}{\partial^{\mathrm{MD}}}
\newcommand{\Var}{\mathrm{Var}}
\newcommand{\pta}{\partial^{0\bot}}
\newcommand{\ptaj}{(\partial^{0\bot})^*}
\newcommand{\ptb}{\partial^{1\bot}}
\newcommand{\ptbj}{(\partial^{1\bot})^*}
\newcommand{\geg}{\Lambda_\vare}
\newcommand{\tpta}{\tilde{\partial}^{0\bot}}
\newcommand{\tptb}{\tilde{\partial}^{1\bot}}
\newcommand{\ua}{u_\alpha}
\newcommand{\pa}{p\alpha}
\newcommand{\qa}{q(1-\alpha)}
\newcommand{\Qa}{Q_\alpha}
\newcommand{\Qb}{Q_\eta}
\newcommand{\ga}{\gamma_\alpha}
\newcommand{\gb}{\gamma_\eta}
\newcommand{\ta}{\theta_\alpha}
\newcommand{\tb}{\theta_\eta}


\newcommand{\mH}{{E}}
\newcommand{\mN}{{N}}
\newcommand{\mD}{{\mathcal{D}}}
\newcommand{\csob}{\mathcal{S}}
\newcommand{\mA}{{A}}
\newcommand{\mK}{{Q}}
\newcommand{\mS}{{S}}
\newcommand{\mI}{{I}}
\newcommand{\tas}{{2_*}}
\newcommand{\tbs}{{2^*}}
\newcommand{\tm}{{\tilde{m}}}
\newcommand{\tdu}{{\phi}}
\newcommand{\tpsi}{{\tilde{\psi}}}
\newcommand{\Z}{{\mathbb{Z}}}
\newcommand{\tsigma}{{\tilde{\sigma}}}
\newcommand{\tg}{{\tilde{g}}}
\newcommand{\tG}{{\tilde{G}}}
\newcommand{\mM}{{M}}
\newcommand{\mC}{\mathcal{C}}
\newcommand{\wlim}{{\text{w-lim}}\,}
\newcommand{\diag}{L_t^\ba L_x^\br}
\newcommand{\vu}{ u}
\newcommand{\vz}{ z}
\newcommand{\vv}{ v}
\newcommand{\ve}{ e}
\newcommand{\vw}{ w}
\newcommand{\vf}{ f}
\newcommand{\vh}{ h}
\newcommand{\vp}{ \vec P}
\newcommand{\ang}{{\not\negmedspace\nabla}}
\newcommand{\dxy}{\Delta_{x,y}}
\newcommand{\lxy}{L_{x,y}}
\newcommand{\gnsand}{\mathrm{C}_{\mathrm{GN},3d}}
\newcommand{\wmM}{\widehat{{M}}}
\newcommand{\wmH}{\widehat{{E}}}
\newcommand{\wmI}{\widehat{{I}}}
\newcommand{\wmK}{\widehat{{Q}}}
\newcommand{\wmN}{\widehat{{N}}}
\newcommand{\wm}{\widehat{m}}
\newcommand{\ba}{\mathbf{a}}
\newcommand{\bb}{\mathbf{b}}
\newcommand{\br}{\mathbf{r}}
\newcommand{\bs}{\mathbf{s}}
\newcommand{\bq}{\mathbf{q}}
\newcommand{\SSS}{\mathcal{S}}



\title[Sharp scattering for focusing intercritical NLS on $\R^d\times \T$]{Sharp scattering for focusing intercritical NLS on high-dimensional waveguide manifolds}
\author{Yongming Luo}

\maketitle

\begin{abstract}
We study the focusing intercritical NLS
\begin{align}\label{abstract_nls}
i\pt_t u+\Delta_{x,y}u=-|u|^\alpha u\tag{NLS}
\end{align}
on the semiperiodic waveguide manifold $\R^d_x\times \T_y$ with $d\geq 5$ and $\alpha\in(\frac{4}{d},\frac{4}{d-1})$. In the case $d\leq 4$, with the aid of the semivirial vanishing theory \cite{Luo_inter}, the author was able to construct a sharp threshold, which being uniquely characterized by the ground state solutions, that sharply determines the bifurcation of global scattering and finite time blow-up solutions in dependence of the sign of the semivirial functional. As the derivative of the nonlinear potential is no longer Lipschitz in $d\geq 5$ and the underlying domain possesses an anisotropic nature, the proof in \cite{Luo_inter}, which makes use of the concentration compactness principle, can not be extended to higher dimensional models. In this paper, we exploit a well-tailored adaptation of the interaction Morawetz-Dodson-Murphy (IMDM) estimates, which were only known to be applicable on Euclidean spaces, into the waveguide setting, in order to prove that the large data scattering result formulated in \cite{Luo_inter} continues to hold for all $d\geq 5$. Together with Tzvetkov-Visciglia \cite{TzvetkovVisciglia2016} and the author \cite{Luo_inter}, we thus give a complete characterization of the large data scattering for \eqref{abstract_nls} in both defocusing and focusing case and in arbitrary dimension.
\end{abstract}

\section{Introduction}
The paper is devoted to the study of large data scattering for the focusing nonlinear Schr\"odinger equation (NLS)
\begin{align}\label{nls}
i\pt_t u+\Delta_{x,y} u=-|u|^\alpha u
\end{align}
on high-dimensional waveguide manifolds $\R_x^d\times \T_y$ with $d\geq 5$, $\T=\R/2\pi\Z$ and $\alpha$ in the intercritical regime $(\frac{4}{d},\frac{4}{d-1})$. The NLS \eqref{nls} serves as a toy model in numerous physical applications such as nonlinear optics and Bose-Einstein condensation \cite{waveguide_ref_1,waveguide_ref_2,waveguide_ref_3}. One of the most interesting features displayed by \eqref{nls} is its semiperiodicity, corresponding to a partial confinement forcing the wave function to periodically move along some given directions. This particularly leads to some unexpected new challenges for the mathematical analysis. Different results concerning the Cauchy problem, long time behaviors and ground state solutions of \eqref{nls} have been well-established. In this direction, we refer for instance to the papers \cite{TNCommPDE,TzvetkovVisciglia2016,TTVproduct2014,Ionescu2,HaniPausader,CubicR2T1Scattering,R1T1Scattering,Cheng_JMAA,ZhaoZheng2021,RmT1,YuYueZhao2021,Luo_Waveguide_MassCritical,Luo_inter,Luo_energy_crit}.

Among all, we underline that the defocuisng analogue of \eqref{nls} was first studied by Tzvetkov and Visciglia \cite{TzvetkovVisciglia2016}. Therein, by making use of the Strichartz estimates of mixed type derived in \cite{TNCommPDE}, the authors proved that the defocusing \eqref{nls} is globally well posed for arbitrary initial data from $H^1(\R^d\times\T)$. Additionally, one of the novelties of \cite{TNCommPDE} is that by appealing to a suitable interaction Morawetz inequality, the authors were able to prove that a global solution of the defocusing \eqref{nls} scatters in time. More precisely, we have the following result:
\begin{theorem}[Large data scattering for defocusing \eqref{nls}, \cite{TNCommPDE}]\label{thm defocusing}
Let $u$ be a solution of the defocusing \eqref{nls}. Then there exist $\phi^{\pm}\in H^1(\R^d\times \T)$ such that
\begin{align}
\lim_{t\to\pm\infty}\|u(t)-e^{it\Delta_{x,y}}\phi^\pm\|_{H^1(\R^d\times\T)}=0.
\end{align}
\end{theorem}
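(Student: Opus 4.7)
The plan is to follow the classical three-step road map for proving scattering of defocusing NLS, suitably modified to accommodate the waveguide geometry: (i) global well-posedness in $H^1(\R^d\times\T)$, (ii) an a priori global spacetime bound obtained via an interaction Morawetz inequality, and (iii) a bootstrap argument upgrading this spacetime bound to scattering in $H^1$.

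For step (i), local well-posedness in $H^1$ throughout the intercritical range $\alpha\in(\tfrac{4}{d},\tfrac{4}{d-1})$ follows from the mixed-type (anisotropic) Strichartz estimates derived in \cite{TNCommPDE}, which exploit the fact that genuine dispersion happens only in the Euclidean factor while the torus factor is handled by Sobolev embedding $H^1(\T)\hookrightarrow L^p(\T)$ for all $p<\infty$. Conservation of mass and of the coercive defocusing energy
\begin{align*}
E(u)=\tfrac{1}{2}\|\nabla_{x,y}u\|_{L^2}^2+\tfrac{1}{\alpha+2}\|u\|_{L^{\alpha+2}}^{\alpha+2}
\end{align*}
gives a uniform $H^1$ bound along the flow, hence global existence.

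For step (ii), following the Colliander-Keel-Staffilani-Takaoka-Tao philosophy adapted to the waveguide, I would consider an interaction Morawetz functional built from the $y$-averaged mass and momentum densities
\begin{align*}
\rho(x,t)=\int_\T |u(x,y,t)|^2\,dy,\qquad p(x,t)=\int_\T \mathrm{Im}(\bar u\,\nabla_x u)(x,y,t)\,dy,
\end{align*}
weighted against the standard kernel $|x-x'|$ on $\R^d\times\R^d$. Differentiating in time and using that the defocusing nonlinearity contributes a nonnegative term, one arrives at a finite global bound of the form $\|u\|_{L_t^q L_x^r L_y^2}\leq C(\|u_0\|_{H^1})$ on a suitable mixed-norm spacetime space. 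In step (iii), this bound allows one to partition $\R$ into finitely many subintervals on which $u$ is small in the relevant Strichartz norm; on each such window the Duhamel formula combined with the anisotropic Strichartz estimates of \cite{TNCommPDE} shows that $e^{-it\Delta_{x,y}}u(t)$ is Cauchy in $H^1(\R^d\times\T)$ as $t\to\pm\infty$, which produces the scattering states $\phi^\pm$.

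The main obstacle is step (ii): interaction Morawetz estimates were originally developed on purely Euclidean backgrounds, and one must verify that passing to the $y$-averaged densities preserves enough of the positivity and the dispersive structure to close the estimate. A related technical point is that the spacetime exponents coming out of the Morawetz inequality must be matched with an admissible pair for the anisotropic Strichartz estimates, and it is precisely this matching that forces the intercritical restriction $\alpha\in(\tfrac{4}{d},\tfrac{4}{d-1})$ and, as the abstract explains, breaks down in a more delicate way once one turns to the focusing problem treated in the present paper.
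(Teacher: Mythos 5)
Your three-step roadmap---global well-posedness from the anisotropic Strichartz estimates together with the coercive conserved energy, an interaction Morawetz inequality built from the $y$-integrated mass and momentum densities, and a bootstrap converting the resulting spacetime bound into Cauchy-ness of $e^{-it\Delta_{x,y}}u(t)$ in $H^1$---is precisely the Tzvetkov--Visciglia argument to which this theorem is attributed; note that the present paper merely quotes the result and does not re-prove it (and the citation label in the theorem points to the earlier small-data paper \cite{TNCommPDE}, while the interaction-Morawetz large-data scattering is really in \cite{TzvetkovVisciglia2016}). One technical point worth flagging in your sketch: with the unmodified weight $|x-x'|$ on $\R^d$, the interaction Morawetz functional produces, once $d\ge 4$, a bound on $\||\nabla_x|^{-(d-3)/4}u\|_{L_{t,x}^4L_y^2}$ rather than on a pure Lebesgue norm of $u$, because $\Delta_x^2|x|$ changes sign behaviour in high dimensions; accordingly the exponent-matching with admissible Strichartz pairs in your last step must be carried out with this negative-regularity norm, exactly as in the high-dimensional Euclidean defocusing theory. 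This is a routine adaptation, not a fundamental obstruction, and the overall plan is sound.
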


Theorem \ref{thm defocusing} does not hold for the focusing \eqref{nls}. To see this, we may simply assume that \eqref{nls} is independent of $y$ and in this case, \eqref{nls} reduces to the mass-(super)critical NLS on $\R^d$, which is known to have finite time blow-up solutions. In general, however, on the Euclidean space $\R^d$ it was proved \cite{KenigMerle2006,HolmerRadial,non_radial,focusing_sub_2011} that one is able to construct some sharp threshold, which is described by the positive and smooth ground state solutions $P\in H^1(\R^d)$ satisfying
\begin{align}\label{ground state eq}
-\Delta P+\beta P=P^{\alpha+1}
\end{align}
for some $\beta>0$, that sharply determines the bifurcation of global scattering and finite time blow-up solutions, in dependence of the sign of the virial functional
\begin{align}
\wmK(u):=\|\nabla_x u\|_{L^2(\R^d)}^2-\frac{\alpha d}{2(\alpha+2)}\|u\|_{L^{\alpha+2}(\R^d)}^{\alpha+2}.
\end{align}
Inspired by the results from \cite{KenigMerle2006,HolmerRadial,non_radial,focusing_sub_2011}, we have proved in a previous paper \cite{Luo_inter} that similar large data scattering results also hold for \eqref{nls} on $\R^d\times\T$ with $d\leq 4$. The situation nevertheless becomes more subtle, since $\T$ is a compact manifold and therefore solutions of \eqref{nls} do not disperse along the $y$-direction. Therefore, to get a proper large data scattering result we shall appeal to the so-called \textit{semivirial-vanishing} framework. To introduce the theory we first fix some notations. Let $\mM(u)$ and $\mH(u)$ be the mass and energy of a solution $u$ of \eqref{nls} defined by \eqref{def of mass} and \eqref{def of mhu} respectively. We also define the semivirial functional $\mK(u)$ by
$$ \mK(u):=\|\nabla_x u\|_{L^2(\R^d\times\T)}^2-\frac{\alpha d}{2(\alpha+2)}\|u\|_{L^{\alpha+2}(\R^d\times\T)}^{\alpha+2}.$$
Notice that the coefficients of the semivirial $\mK(u)$ coincide with the ones of $\wmK(u)$, i.e. $\mK(u)$ is simply the integration of $\wmK(u)$ over $\T$. Next, for $c\in (0,\infty)$ we define
\begin{gather*}
S(c):=\{u\in H^1(\R^d\times\T):\mM(u)=c\},\\
V(c):=\{u\in S(c):\mK(u)=0\},\\
m_c:=\inf\{\mH(u):u\in V(c)\}.
\end{gather*}
By combining ideas from \cite{Akahori2013,LeCoz2008,Bellazzini2013} and a non-existence result \cite{Liouville_type} for the zero-mass problem
$$-\Delta_{x,y}u=u^{\alpha+1}$$
we were able to prove the following existence result for the ground state equation \eqref{ground state eq}.
\begin{theorem}[Existence of ground states, \cite{Luo_inter}]\label{thm existence of ground state}
For any $c\in(0,\infty)$ the minimization problem $m_c$ has a positive optimizer $u_c$. Moreover, $u_c$ solves the ground state equation \eqref{ground state eq} on $\R^d\times\T$ with some $\beta=\beta_c>0$.
\end{theorem}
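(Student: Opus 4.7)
The plan is to realize $u_c$ as an optimizer of the mass-constrained Pohozaev minimization $m_c$, adapting the framework of \cite{Akahori2013,LeCoz2008,Bellazzini2013} to the waveguide geometry. The structural device is the $x$-anisotropic rescaling $u_t(x,y):=t^{d/2}u(tx,y)$, which preserves $\mM(u)$ while scaling $\|\nabla_x u\|_{L^2}^2$ by $t^2$ and $\|u\|_{L^{\alpha+2}}^{\alpha+2}$ by $t^{\alpha d/2}$. Since $\alpha d/2>2$ in the intercritical regime, $t\mapsto\mH(u_t)$ admits a unique positive maximizer $t^*(u)$ characterized by $\mK(u_{t^*})=0$; consequently $m_c=\inf_{u\in S(c)}\max_{t>0}\mH(u_t)$. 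Moreover, on $V(c)$ the energy takes the manifestly coercive form
\begin{equation*}
\mH(u)=\frac{\alpha d-4}{2\alpha d}\,\|\nabla_x u\|_{L^2}^2+\tfrac12\|\nabla_y u\|_{L^2}^2,
\end{equation*}
so $m_c\geq 0$ and any minimizing sequence in $V(c)$ is uniformly bounded in $H^1(\R^d\times\T)$.

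First I would prove $m_c>0$: combining $\mK(u)=0$ with the anisotropic Gagliardo--Nirenberg inequality on $\R^d\times\T$ (obtained by applying the Euclidean $\R^d$-estimate fiberwise in $y$ and integrating over $\T$) yields a quantitative lower bound $\|\nabla_x u\|_{L^2}\geq\delta(c)>0$ on $V(c)$. I then pick a minimizing sequence $(u_n)\subset V(c)$ and, after translating $u_n$ in the $x$-variable, extract a weak limit $u_c$ along a subsequence.

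The crux of the argument, and the main obstacle, is to upgrade weak to strong convergence. Vanishing along all $x$-translations is excluded via Lions' lemma together with the lower bound on $\|u_n\|_{L^{\alpha+2}}$ coming from $\mK(u_n)=0$: were it to occur, $\|\nabla_x u_n\|_{L^2}\to 0$ and hence $\mH(u_n)\to 0$, contradicting $m_c>0$. Dichotomy is excluded by the strict subadditivity $m_c<m_{c_1}+m_{c_2}$ for every decomposition $c=c_1+c_2$ with $c_i>0$, which in turn follows from a scaling monotonicity analysis of $c\mapsto m_c$ in the spirit of \cite{Bellazzini2013}. A waveguide-specific obstruction is the possible appearance, inside a profile decomposition of $(u_n)$, of profiles that rescale with concentration rates $\lambda_n\to\infty$ to a solution of the zero-mass equation $-\Delta u=u^{\alpha+1}$ on $\R^d$ or $\R^{d+1}$; the Liouville-type nonexistence result \cite{Liouville_type} forces every such profile to vanish, so the full mass $c$ is carried by a single profile living on $\R^d\times\T$, namely $u_c$.

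Once compactness is established, $u_n(\cdot-\xi_n,\cdot)\to u_c$ strongly in $H^1$, whence $u_c\in V(c)$ and $\mH(u_c)=m_c$. Replacing $u_c$ by $|u_c|$ preserves $\mM$, $\mH$ and $\mK$, so one may assume $u_c\geq 0$, and the strong maximum principle then yields $u_c>0$. Finally, the Lagrange multiplier principle furnishes $\lambda,\mu\in\R$ with $\mH'(u_c)+\lambda\mM'(u_c)+\mu\mK'(u_c)=0$; testing this identity against the infinitesimal generator of the $x$-scaling $u_t$ and using $\mK(u_c)=0$ together with $\alpha d>4$ forces $\mu=0$, so $u_c$ satisfies $-\Delta_{x,y}u_c+\beta_c u_c=u_c^{\alpha+1}$ for some $\beta_c\in\R$. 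The sign $\beta_c>0$ is then extracted by combining the Nehari identity for this equation with the $x$-Pohozaev identity and the $H^1$-decay of $u_c$.
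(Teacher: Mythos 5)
The high-level framework you propose — Pohozaev/Nehari-type minimization over $V(c)$, the anisotropic rescaling $u\mapsto u^t$, the min-max characterization, concentration compactness with Lions' trichotomy, and a Liouville ingredient — is indeed the route \cite{Luo_inter} describes (combining \cite{Akahori2013,LeCoz2008,Bellazzini2013} with \cite{Liouville_type}). However, two specific steps in your argument do not close on the waveguide, precisely because $\T$ breaks the rotational/dilational structure that makes the analogous Euclidean steps automatic.

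First, the vanishing step. On $V(c)$ one has $\mH(u)=\mI(u)=\tfrac{\alpha d-4}{2\alpha d}\|\nabla_x u\|_2^2+\tfrac12\|\pt_y u\|_2^2$, so if vanishing gives $\|u_n\|_{\alpha+2}\to 0$ and hence $\|\nabla_x u_n\|_2\to 0$, this does \emph{not} yield $\mH(u_n)\to 0$: the $\|\pt_y u_n\|_2^2$ term is entirely unaffected by vanishing. The contradiction should instead come directly from the uniform lower bound $\|\nabla_x u_n\|_2\geq\delta(c)>0$ that you correctly derive from the anisotropic Gagliardo--Nirenberg inequality and the $H^1$-boundedness of the minimizing sequence; the detour through $\mH(u_n)\to 0$ is incorrect as written.

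Second, and more seriously, the sign $\beta_c>0$. You claim it follows from the Nehari identity together with the $x$-Pohozaev identity. On $\R^n$ the full Pohozaev identity combined with Nehari gives $\beta\|u\|_2^2=\frac{4-(n-2)\alpha}{2(\alpha+2)}\|u\|_{\alpha+2}^{\alpha+2}>0$ in the subcritical range. On $\R^d\times\T$ only the $x$-Pohozaev is available (there is no $y$-dilation on $\T$), and combining it with Nehari merely reproduces the constraint $\mK(u_c)=0$; eliminating $\|u_c\|_{\alpha+2}^{\alpha+2}$ instead gives
\begin{equation*}
\beta_c\|u_c\|_2^2=\frac{2\alpha+4-\alpha d}{\alpha d}\|\nabla_x u_c\|_2^2-\|\pt_y u_c\|_2^2,
\end{equation*}
whose sign is \emph{not} determined in general (the first coefficient is positive since $\alpha<\tfrac{4}{d-1}$, but $\|\pt_y u_c\|_2^2$ is not controlled by $\|\nabla_x u_c\|_2^2$). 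This is exactly the obstruction that forces \cite{Luo_inter} to bring in the Liouville result \cite{Liouville_type}: it is needed to rule out $\beta_c=0$, since a positive $H^1$ solution of $-\Delta_{x,y}u=u^{\alpha+1}$ on $\R^d\times\T$ would, after periodic extension to $\R^{d+1}$, contradict Gidas--Spruck because $\alpha+1<\tfrac{(d+1)+2}{(d+1)-2}$. In your write-up the Liouville result is instead spent on ruling out ``concentrating profiles,'' but in this $H^1$-bounded, mass-supercritical (in $x$) and energy-subcritical minimization the profile decomposition involves translations only, with no small-scale concentration to rescale, so that application is vacuous while the place it is actually required — the sign of the Lagrange multiplier — is left open. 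Finally, a minor imprecision: replacing $u_c$ by $|u_c|$ does not a priori preserve $\mK$ (the diamagnetic inequality gives $\mK(|u_c|)\leq\mK(u_c)$ with possible strict inequality, pushing $|u_c|$ off $V(c)$); one should argue via Lemma~\ref{lem tilde mc} that $\mI(|u_c|)<\mI(u_c)=m_c=\tilde m_c$ together with $\mK(|u_c|)\leq 0$ would contradict the definition of $\tilde m_c$, so equality holds.
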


Notice that by the boundedness of $\T$, we see that a solution of \eqref{ground state eq} on $\R^d$ is automatically a solution of \eqref{ground state eq} on $\R^d\times\T$. We therefore naturally ask whether the ground states deduced from Theorem \ref{thm existence of ground state} coincide with the ones on $\R^d$, or in other words whether they are $y$-independent. Following the crucial scaling arguments from \cite{TTVproduct2014} we indeed proved the following $y$-dependence result.

\begin{theorem}[$y$-dependence of ground states, \cite{Luo_inter}]\label{thm threshold mass}
Let $\wmM(u)$ and $\wmH(u)$ be the mass and energy of a function $u\in H^1(\R^d)$ defined on $\R^d$. Define also
\begin{gather*}
\widehat{S}(c):=\{u\in H^1(\R^d):\wmM(u)=c\},\\
\widehat{V}(c):=\{u\in \widehat{S}(c):\wmK(u)=0\},\\
\widehat{m}_c:=\inf\{\wmH(u):u\in \widehat{V}(c)\}.
\end{gather*}
Then there exists some $c_*\in(0,\infty)$ such that
\begin{itemize}
\item For all $c \in (0,c_*)$ we have $m_{c}<2\pi \wm_{(2\pi)^{-1}c}$. Moreover, for $c\in(0,c_*)$ any minimizer $u_c$ of $m_{c}$ satisfies $\pt_y u_c\neq 0$.

\item For all $c\in[c_*,\infty)$ we have $m_{c}=2\pi \wm_{(2\pi)^{-1}c}$. Moreover, for $c\in(c_*,\infty)$ any minimizer $u_c$ of $m_{c}$ satisfies $\pt_y u_c=0$.
\end{itemize}
\end{theorem}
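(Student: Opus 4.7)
The plan is to compare $m_c$ with its $y$-independent counterpart $2\pi\wm_{c/(2\pi)}$ by combining a trial-function construction with a scaling-transfer argument. First, since the $y$-independent embedding $H^1(\R^d)\hookrightarrow H^1(\R^d\times\T)$ sends $\widehat{V}(c/(2\pi))$ into $V(c)$ with $\mH$ scaled by $2\pi$, one always has $m_c\leq 2\pi\wm_{c/(2\pi)}$. In particular, once strict inequality is established no minimizer of $m_c$ can be $y$-independent, which handles the $\pt_y u_c\neq 0$ assertion of the first bullet; conversely, if equality holds and the minimizer happens to be $y$-independent it coincides with an $\R^d$-ground state of mass $c/(2\pi)$.

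To produce the deficit for small $c$, I would take $V$ a ground state on $\R^d$ of mass $c/(2\pi)$ and the one-parameter family $\psi_\epsilon(y):=\sqrt{1-\epsilon^2/2}+\epsilon\cos y$, normalized so $\|\psi_\epsilon\|_{L^2(\T)}^2=2\pi$. Consider
\[
u_\epsilon(x,y):=\lambda(\epsilon)^{d/2}\,V(\lambda(\epsilon)x)\,\psi_\epsilon(y),
\]
where $\lambda(\epsilon)>0$ is uniquely determined by $\mK(u_\epsilon)=0$ (solvable thanks to $\alpha d/2>2$). The Pohozaev identity $\|\nabla V\|_{L^2(\R^d)}^2=\tfrac{\alpha d}{2(\alpha+2)}\|V\|_{L^{\alpha+2}(\R^d)}^{\alpha+2}$ then yields the clean expression $\mH(u_\epsilon)=2\pi\wm_{c/(2\pi)}\lambda(\epsilon)^2+\tfrac{c}{4\pi}\|\psi_\epsilon'\|_{L^2(\T)}^2$. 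A Taylor expansion in $\epsilon$ gives
\[
\mH(u_\epsilon)-2\pi\wm_{c/(2\pi)}=\epsilon^2\Bigl[\tfrac{c}{4}-2\pi C_1\wm_{c/(2\pi)}\Bigr]+O(\epsilon^4),
\]
with $C_1:=\alpha(\alpha+2)/(\alpha d-4)>0$. In the intercritical regime the $\R^d$ ground-state level obeys the scaling law $\wm_c=Kc^{-\kappa}$ with $\kappa=(4-\alpha(d-2))/(\alpha d-4)>0$, so $c/\wm_{c/(2\pi)}\to 0$ as $c\to 0^+$ and the bracket is strictly negative for all sufficiently small $c$, yielding $m_c<2\pi\wm_{c/(2\pi)}$.

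To extract the threshold, set $c_*:=\sup\{c>0:m_c<2\pi\wm_{c/(2\pi)}\}$. The key monotonicity---if $m_{c_0}<2\pi\wm_{c_0/(2\pi)}$ then $m_c<2\pi\wm_{c/(2\pi)}$ for every $c\in(0,c_0)$---is obtained via the $\mK$-preserving dilation $v_\nu(x,y):=\nu\,u_{c_0}(\nu^{\alpha/2}x,y)$ applied to a minimizer $u_{c_0}$. Using $\mK(u_{c_0})=0$ one checks that $\mK(v_\nu)=0$, $\mM(v_\nu)=\nu^{B}c_0$ with $B:=2-d\alpha/2<0$, and a direct computation gives
\[
\mH(v_\nu)-2\pi\wm_{\mM(v_\nu)/(2\pi)}=-c_1\,\nu^A+E_y\,\nu^B,\qquad A:=2-\alpha(d-2)/2>0,
\]
where $E_y:=\tfrac12\|\pt_y u_{c_0}\|_{L^2}^2>0$ (forced by Step 1) and $c_1:=2\pi\wm_{c_0/(2\pi)}-\tfrac{\alpha d-4}{2\alpha d}\|\nabla_x u_{c_0}\|_{L^2}^2>E_y$ by the deficit hypothesis. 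For $\nu>1$ (i.e., $c<c_0$) one has $\nu^A>1>\nu^B$, so the right-hand side stays below $-c_1+E_y<0$, delivering the propagation. Hence the deficit region is exactly $(0,c_*)$. Finiteness $c_*<\infty$ and the equality $m_c=2\pi\wm_{c/(2\pi)}$ for $c\geq c_*$ are obtained from matching lower bounds at large mass, via an anisotropic Gagliardo--Nirenberg inequality applied to the identity $\mH(u)=\tfrac{\alpha d-4}{2\alpha d}\|\nabla_x u\|_{L^2}^2+\tfrac12\|\pt_y u\|_{L^2}^2$ valid on $V(c)$, in combination with the scaling of $\wm_c$. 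Finally, for $c>c_*$ any minimizer with $\pt_y u_c\neq 0$ would, by the same scaling transfer with $\nu$ slightly above one, generate strict deficit at some $c'\in(c_*,c)$, contradicting the definition of $c_*$.

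The main obstacle in the scheme is the finiteness $c_*<\infty$: the monotonicity argument only propagates deficit downward in mass, and ruling out a perpetual deficit requires a sharp anisotropic GN-type comparison reconciling the $\R^d$-scaling $\wm_c\sim c^{-\kappa}$ with the non-scale-invariant $\T$-direction, which is the delicate technical input borrowed from \cite{TTVproduct2014}.
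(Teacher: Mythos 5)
The paper under review does not itself prove Theorem~\ref{thm threshold mass}; it imports it from \cite{Luo_inter}, where the proof is said to follow the scaling arguments of \cite{TTVproduct2014}. Against that backdrop, your sketch reproduces the expected scheme: the $y$-independent embedding gives $m_c\le 2\pi\wm_{(2\pi)^{-1}c}$; a trial function $V(\lambda x)\psi_\epsilon(y)$ produces strict deficit for small $c$; a $\mK$-preserving rescaling $v_\nu$ propagates the deficit downward in mass; for large $c$ one expects equality and $y$-independent minimizers. I checked the first three steps and they are consistent: the Taylor expansion $\lambda(\epsilon)^2=1-\tfrac{\alpha(\alpha+2)}{\alpha d-4}\epsilon^2+O(\epsilon^4)$, the identity $\mH(u_\epsilon)=2\pi\wm_{(2\pi)^{-1}c}\lambda^2+\tfrac{c}{4\pi}\|\psi_\epsilon'\|^2$ (this is just $\mH=\mI$ on $V(c)$ plus the Pohozaev relation for $V$), and the two-scale decomposition $\mH(v_\nu)-2\pi\wm_{(2\pi)^{-1}\mM(v_\nu)}=-c_1\nu^A+E_y\nu^B$ with $A=-B\kappa>0>B$ all check out, and the sign analysis for $\nu>1$ goes through as stated.

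The genuine gap is the finiteness $c_*<\infty$, which you flag yourself but do not resolve. This is not a detail: the entire second bullet of the theorem hangs on it, and the rescaling $v_\nu$ can only propagate deficit downward in mass, so a one-way argument cannot by itself produce an upper threshold. What is missing is a matching lower bound $m_c\ge 2\pi\wm_{(2\pi)^{-1}c}$ for $c$ large. The route taken in \cite{TTVproduct2014} (and thus presumably in \cite{Luo_inter}) is to Fourier decompose $u(x,y)=\sum_k u_k(x)e^{iky}$, observe that any nonzero $y$-mode contributes a mass-independent kinetic penalty of the form $\tfrac12\|\pt_y u\|_2^2\ge\pi\sum_{k\ne 0}\|u_k\|_{L^2_x}^2$, and exploit the explicit scaling $\wm_c=Kc^{-\kappa}\to 0$ as $c\to\infty$ to show that for large mass this fixed penalty dominates any nonlinear gain from concentrating in $y$. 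Without this input you only know that the deficit set is an initial interval, not that it is bounded. I would also record the boundary case $m_{c_*}=2\pi\wm_{(2\pi)^{-1}c_*}$ explicitly: combining the continuity of $c\mapsto m_c$ from Lemma~\ref{monotone lemma} with the continuity of $c\mapsto\wm_c$ (given by the scaling law) upgrades any strict deficit at $c_*$ to a neighborhood of $c_*$, contradicting the definition of $c_*$ as a supremum.
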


Finally, using the concentration compactness principle and the Glassey's virial identity we deduced in \cite{Luo_inter} the following large data scattering and blow-up results.

\begin{theorem}[Scattering below ground states, \cite{Luo_inter}]\label{thm scattering d4}
Let $u$ be a solution of \eqref{nls}. If $\mH(u)<m_{\mM(u)}$ and $\mK(u(0))>0$, then $u$ is a global solution of \eqref{nls}. If additionally $d\leq 4$, then $u$ also scatters in time.
\end{theorem}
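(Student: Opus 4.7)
The plan is to establish the two assertions in sequence. For global existence I would use a continuity/coercivity argument driven by the threshold $m_c$. By conservation of mass and energy one has $\mH(u(t)) = \mH(u(0)) < m_{\mM(u(0))} = m_{\mM(u(t))}$ throughout the lifespan; if $\mK(u(t_0)) = 0$ ever occurred, then $u(t_0)\in V(\mM(u))$, whence $\mH(u(t_0))\geq m_{\mM(u)}$, a contradiction. Since $\mK(u(0))>0$ and $t\mapsto \mK(u(t))$ is continuous, we deduce $\mK(u(t))>0$ throughout. Combining this with the energy identity $\mH(u)=\tfrac12\|\nabla_{x,y}u\|_{L^2}^2-\tfrac{1}{\alpha+2}\|u\|_{L^{\alpha+2}}^{\alpha+2}$ and the inequality $\alpha>4/d$ yields a uniform bound on $\|\nabla_{x,y}u(t)\|_{L^2}^2$; mass conservation then gives a uniform $H^1$-bound, ruling out blow-up via the standard blow-up alternative.

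For the scattering conclusion (with $d\leq 4$) I would follow the Kenig-Merle concentration-compactness roadmap. First, set up a linear profile decomposition for bounded sequences in $H^1(\R^d\times\T)$, each profile carrying an $x$-translation and a time shift; compactness of $\T$ eliminates the scaling and $y$-translation parameters that occur in the purely Euclidean setting. Second, argue by contradiction: assuming scattering fails for some data in the subthreshold region $\{\mH(u)<m_{\mM(u)},\,\mK(u)>0\}$, the profile decomposition combined with small-data scattering and a Pythagorean decomposition of mass, energy and $\mK$ produces a minimal non-scattering critical element $u_c$ whose trajectory is precompact in $H^1$ modulo an $x$-translation path $x(t)$. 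Third, execute a rigidity argument: a variational comparison with $m_{\mM(u_c)}$ yields a uniform coercivity $\mK(u_c(t))\geq\delta>0$, and plugging $u_c$ into the Glassey-type virial identity localized in $x$ by a smooth cut-off (which is natural here since $|x|^2$ and $\mK$ are both purely $x$-quantities and the $y$-integrations commute through) forces the truncated virial to be simultaneously bounded (via precompactness of the orbit) and to grow without bound (via the coercivity), producing the desired contradiction.

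The principal obstacle is the rigidity step, together with the reason it constrains $d\leq 4$. One has to control the $x$-translation path $x(t)$ of the critical element; as the waveguide retains the full translation symmetry in $x$, the standard argument using the conservation of momentum along the translation orbit still applies and forces $|x(t)|=o(t)$, which suffices to close the virial argument. The genuine obstruction to dropping the condition $d\leq 4$ appears already in the concentration-compactness machinery: for $d\geq 5$ the exponent $\alpha\in(\tfrac{4}{d},\tfrac{4}{d-1})$ satisfies $\alpha<1$, so the map $u\mapsto |u|^\alpha u$ has a non-Lipschitz derivative, and the perturbation lemma and the inverse-Strichartz extraction underpinning the profile decomposition both break down. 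Circumventing this — and thereby removing the dimensional restriction — is precisely what the interaction Morawetz-Dodson-Murphy strategy developed in the present paper is designed to accomplish, bypassing the concentration-compactness step entirely.
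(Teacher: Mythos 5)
Your proposal is correct and follows essentially the same route as the cited reference \cite{Luo_inter}, which the present paper invokes without reproducing its proof: global existence via conservation laws, the variational exclusion of $\mK(u(t))=0$ below the threshold $m_{\mM(u)}$, and the resulting uniform $H^1$ bound, followed by the Kenig--Merle concentration-compactness/rigidity scheme with a localized virial argument for scattering in $d\le 4$. You also correctly identify the obstruction at $d\ge 5$, namely that $\alpha<1$ makes $z\mapsto|z|^\alpha z$ fail to have a Lipschitz derivative so the stability lemma needed in the profile-decomposition machinery breaks down (the paper further emphasizes that the anisotropy of $\R^d\times\T$ blocks the usual fractional-calculus workaround), which is precisely what the present paper circumvents via the IMDM estimates.
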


\begin{theorem}[Finite time blow-up below ground states, \cite{Luo_inter}]\label{thm blow up}
Let $u$ be a solution of \eqref{nls}. If $|x|u(0)\in L^2(\R^d\times\T)$, $\mH(u)<m_{\mM(u)}$ and $\mK(u(0))<0$, then $u$ blows-up in finite time.
\end{theorem}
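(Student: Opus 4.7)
The plan is to run Glassey's virial argument, localized to the Euclidean direction so that the semivirial $\mK$ replaces the usual full virial. Set
\begin{align*}
V(t):=\int_{\R^d\times\T}|x|^2\,|u(t,x,y)|^2\,dxdy,
\end{align*}
which is finite at $t=0$ by assumption. Because the weight $|x|^2$ depends only on the $\R^d$-variable (so $\pt^2_{x_j x_k}|x|^2=2\delta_{jk}$ for $1\le j,k\le d$ and all $\pt_y$-derivatives vanish), the classical Morawetz/virial calculation applied to \eqref{nls} produces the Glassey-type identity
\begin{align*}
V''(t)=8\,\mK(u(t))
\end{align*}
on the maximal existence interval $[0,T^*)$. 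The twice-differentiability of $V$ under the bare assumption $|x|u(0)\in L^2$ is justified by the customary truncation: replace $|x|^2$ by a smooth weight $\varphi_R(x)$ with $\varphi_R(x)=|x|^2$ on $\{|x|\le R\}$ and uniformly controlled higher derivatives, then pass $R\to\infty$; the compact $\T$-fiber contributes only through Fubini and poses no obstruction.

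Next I need a uniform negative gap $\mK(u(t))\le-\delta<0$. Mass and energy are conserved and $t\mapsto\mK(u(t))$ is continuous on $[0,T^*)$. Consequently, the set $\{v:\mM(v)=\mM(u(0)),\,\mH(v)<m_{\mM(u(0))},\,\mK(v)<0\}$ is invariant under the flow: a first time $t_0$ with $\mK(u(t_0))=0$ would place $u(t_0)$ in $V(\mM(u(0)))$ with $\mH(u(t_0))<m_{\mM(u(0))}$, contradicting the definition of $m_{\mM(u(0))}$ (cf.\ Theorem \ref{thm existence of ground state}). For the quantitative gap, consider the mass-preserving $x$-dilation $v_s(x,y):=s^{d/2}v(sx,y)$ applied to $v:=u(t)$ and the fibering $G(s):=\mH(v_s)$, for which $sG'(s)=\mK(v_s)$. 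Since $\alpha>\frac{4}{d}$, the exponent $\frac{d\alpha}{2}$ exceeds $2$, so $G$ has a unique positive maximum at some $s^*=s^*(t)$; the condition $\mK(v)=G'(1)<0$ forces $s^*\in(0,1)$, and $\mK(v_{s^*})=0$ places $v_{s^*}$ in $V(\mM(u(0)))$, whence $\mH(v_{s^*})\ge m_{\mM(u(0))}$. The nonlinearity-free identity
\begin{align*}
\mH(v)-\frac{2}{d\alpha}\mK(v)=\frac{d\alpha-4}{2d\alpha}\|\nabla_x v\|_{L^2}^2+\frac{1}{2}\|\pt_y v\|_{L^2}^2,
\end{align*}
combined with $\|\nabla_x v_{s^*}\|_{L^2}^2=(s^*)^2\|\nabla_x v\|_{L^2}^2\le\|\nabla_x v\|_{L^2}^2$ and $\|\pt_y v_{s^*}\|_{L^2}=\|\pt_y v\|_{L^2}$, yields $\mH(v)\ge\mH(v_{s^*})+\frac{2}{d\alpha}\mK(v)$, and therefore
\begin{align*}
-\mK(u(t))\ge\frac{d\alpha}{2}\bigl(m_{\mM(u(0))}-\mH(u(0))\bigr)=:\delta>0.
\end{align*}

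Combining the two steps yields $V''(t)\le-8\delta$ on $[0,T^*)$, whence $V(t)\le V(0)+V'(0)t-4\delta t^2$; this becomes negative in finite time while $V(t)\ge 0$, forcing $T^*<\infty$. The most delicate step is the coercivity estimate, but the waveguide geometry poses no genuine obstruction: since $\mK$ involves only $x$-derivatives and the mass-preserving $x$-dilation preserves the $\pt_y$-part of the energy, the Euclidean scaling argument transfers directly to $\R^d\times\T$.
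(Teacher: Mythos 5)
Your proof is correct and follows the route the paper attributes to \cite{Luo_inter}: Glassey's virial identity with weight $|x|^2$ (so only $x$-derivatives and $\Delta_x|x|^2=2d$ enter, the periodic $y$-integration by parts producing no boundary terms), yielding $V''(t)=8\mK(u(t))$, combined with a variational coercivity gap from flow-invariance of the set $\{\mH<m_c,\ \mK<0\}$ and the scaling fibering $s\mapsto\mH(u^s)$. Your quantitative step $-\mK(u(t))\ge\frac{d\alpha}{2}(m_c-\mH(u(0)))$ is in effect a rederivation of Lemma \ref{lem tilde mc} together with the monotonicity of $\mI$ under the $x$-dilation, and could equivalently be obtained by invoking that lemma directly (since $\mK(u(t))<0$ gives $\mI(u(t))\ge\tilde m_c=m_c$), but the argument as written is sound.
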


We note that the scattering result in Theorem \ref{thm scattering d4} only holds in $d\leq 4$, which is due to the fact that the derivative of $|z|^\alpha z$ is no longer Lipschitz in $d\geq 5$. Indeed, this issue already arises when one studies the large data scattering problem for NLS on high-dimensional Euclidean spaces. To overcome this difficulty, one may appeal to the fractional calculus for proving a weaker stability result, which already suffices for many applications. We refer to \cite{defocusing5dandhigher} for more details in this direction. However, as we shall see in the following, the anisotropic nature of $\R^d\times\T$ forces us to take different orders of fractional derivatives w.r.t. the $x$- and $y$-directions, and so far it is unknown how to prove a suitable stability result in the waveguide setting. This prevented us to extend Theorem \ref{thm scattering d4} to higher dimensions $d\geq 5$.

With the aid of the interaction Morawetz-Dodson-Murphy (IMDM) estimates we shall prove the following main theorem:
\begin{theorem}\label{main result}
The large data scattering result formulated in Theorem \ref{thm scattering d4} continues to hold for all $d\geq 5$.
\end{theorem}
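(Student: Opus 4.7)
The plan is to dispense with the concentration-compactness/rigidity machinery used in \cite{Luo_inter} (the source of the $d\le 4$ restriction) and to close the scattering argument via a waveguide-adapted interaction Morawetz--Dodson--Murphy (IMDM) scheme. The advantage is that IMDM requires only a Strichartz-based local theory together with the monotonicity of a virial-type quantity, thereby bypassing the Lipschitz assumption on the derivative of the nonlinear potential that obstructs the profile-decomposition route in $d\ge 5$.

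First, I would establish \emph{energy trapping}: using the variational characterization of $m_{\mM(u)}$ together with the assumptions $\mH(u)<m_{\mM(u)}$ and $\mK(u(0))>0$, one shows the existence of $\delta>0$ with $\mK(u(t))\geq\delta$ throughout the lifespan, as well as a uniform $H^1$-bound on $u(t)$. Global existence follows (as already contained in Theorem \ref{thm scattering d4}), and the positive lower bound on $\mK$ supplies the coercivity needed to absorb the focusing nonlinearity into the Morawetz estimate. In parallel, within the waveguide Strichartz framework of \cite{TNCommPDE}, I would set up a quantitative scattering criterion: finiteness of an appropriate mixed-norm $\|u\|_{L^q_tL^r_{x,y}(\R)}$ implies scattering in $H^1(\R^d\times\T)$, reducing the problem to controlling this norm on sufficiently long time slabs.

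The heart of the argument is the IMDM estimate adapted to $\R^d\times\T$. I would introduce a Morawetz action
\begin{equation*}
\mathcal{M}_R(t)=2\,\mathrm{Im}\int_{(\R^d\times\T)^2}|u(t,x',y')|^2\,\bar u(t,x,y)\,\nabla_x a_R(x-x')\cdot\nabla_x u(t,x,y)\,dx\,dy\,dx'\,dy',
\end{equation*}
where $a_R$ is a smooth radial truncation of $|x|$ at scale $R$. Because $a_R$ depends only on the Euclidean variable $x$, the positivity produced by the commutator $[-\Delta_x,\nabla_x a_R\cdot\nabla_x]$ mirrors the Euclidean computation while $\pt_y a_R\equiv 0$, and the periodic factor $\T_y$ enters as a parameter. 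Differentiating $\mathcal{M}_R$ along the flow, and combining the uniform $H^1$-bound with the coercivity of $\mK$, one obtains on any interval $I$ an estimate of the schematic form
\begin{equation*}
\int_I\int_{\R^d\times\T}|u(t,x,y)|^{\alpha+2}\chi_{|x|\le R}\,dx\,dy\,dt\lesssim R+o_R(|I|).
\end{equation*}
Choosing $R=R(|I|)\to\infty$ slowly in the Dodson--Murphy fashion and applying a Gagliardo--Nirenberg estimate in the $x$-variable after integration over the compact $\T$-fiber should then force the scattering norm $\|u\|_{L^q_tL^r_{x,y}(\R)}$ to remain finite, concluding via the criterion above.

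The principal obstacle lies in executing the third step in the anisotropic geometry. The standard Euclidean IMDM uses the full Laplacian in the virial commutator, whereas on $\R^d\times\T$ one only differentiates the weight $a_R$ in $x$, so the $\pt_y^2$ contributions coming from $\Delta_{x,y}$ must be verified either to cancel or to be manifestly favorable after $y$-integration; the truncation errors generated by $a_R$ must be shown to remain subcritical relative to the lower bound on $\mK$; and the final Gagliardo--Nirenberg/Sobolev step must be made compatible with the waveguide Strichartz pairs used in the scattering criterion---the anisotropic analogue of the arguments implicit in the Euclidean literature such as \cite{defocusing5dandhigher}. The reward is that no fractional-calculus stability theory is required, so the strategy remains valid across the full range $d\ge 5$, $\alpha\in(\tfrac{4}{d},\tfrac{4}{d-1})$.
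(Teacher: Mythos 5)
Your high-level strategy — replace the concentration-compactness route of \cite{Luo_inter} by a waveguide-adapted IMDM scheme driven by energy trapping, a truncated $x$-only Morawetz weight whose $\pt_y^2$-contributions vanish after periodic integration, and a quantitative scattering criterion — is exactly the architecture of the paper's proof. The Morawetz-identity computation that the $y$-Laplacian terms cancel (via $\bar u\pt_y^2 u=\pt_y(\bar u\pt_y u)-|\pt_y u|^2$ and periodicity) is indeed the content of Lemma \ref{lem morawetz identity}, and the coercivity/energy-trapping step you describe corresponds to Lemma \ref{lem coercive}. So the plan is aimed in the right direction.

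However, there are two concrete gaps. First, the Morawetz action you write down is not the Dodson--Murphy one: the focusing case requires, at the coercivity step, a Galilean correction $u^{\xi}(t,x,y)=e^{ix\cdot\xi}u(t,x,y)$ with $\xi$ the local momentum of the truncated piece (as in Lemma \ref{lem coercive}), together with the spatial translates $\chi_R(\cdot - z)$ averaged over $z$ and the logarithmic average over $R\in[R_0,R_0e^J]$. Without these corrections the cutoff $\chi_R u$ need not stay in the trapped region $\mathcal{A}$, and the sign of $\mK$ cannot be propagated to the localized piece, so the ``absorb the focusing nonlinearity'' step does not go through. Your schematic bound $\int_I\int|u|^{\alpha+2}\chi_{|x|\le R}\lesssim R+o_R(|I|)$ is the shape of the \emph{defocusing} Tzvetkov--Visciglia interaction Morawetz rather than of the Dodson--Murphy estimate (Lemma \ref{lem inter morawetz}), whose output is a space--time average of $\|\chi_R(\cdot-z)u\|_{L^2_{x,y}}^2\|\nabla_x(\chi_R(\cdot-z)u^{\xi})\|_{L^2_{x,y}}^2$ and leads, via a modified Gagliardo--Nirenberg inequality, to smallness of $\|u\|_{L_{t,x}^{2d/(d-1)}L_y^2}$ on a chosen subinterval, not to a direct bound on $\|u\|_{L^q_tL^r_{x,y}(\R)}$.

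Second — and this is the main novelty the paper advertises and you have left as an unaddressed ``principal obstacle'' — the scattering criterion on $\R^d\times\T$ cannot live in a pure Lebesgue norm $L^q_tL^r_{x,y}$. Because small-data theory in the waveguide requires regularity in $y$, the scattering norm is $L^{\ba}_tL^{\br}_xH^s_y$ with $s>\tfrac12$ (Lemma \ref{lem scattering criterion}), whereas the IMDM output carries no derivative in $y$ at all. Bridging these is not a routine Sobolev step: the paper needs a separate local control lemma (Lemma \ref{local control}, giving $\|u\|_{L_t^pW_x^{1-s,q}H_y^s(I)}\lesssim\la I\ra^{1/p}$ via the anisotropic interpolation \eqref{amann inter}) and then a two-parameter interpolation between the $H_y^s$-free IMDM smallness and this $H_y^s$-heavy but interval-growing bound, with the exponents tuned so that the $\vare^{-\sigma}$ growth is dominated by the $\vare$-smallness for small $\sigma$. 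The scattering criterion itself also contains a nontrivial choice $\br_2=\tfrac{2}{1-\alpha}$ for the tail-dispersive estimate, whose admissibility reduces to a cubic polynomial inequality in $\alpha$ that must be checked across the full intercritical range — this is not a formality. Without supplying these ingredients, the final ``should then force the scattering norm to remain finite'' does not close.
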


Here follow several comments on the IMDM-estimates and Theorem \ref{main result}. Originally, the interaction Morawetz estimate was first introduced in \cite{defocusing3d} for proving the large data scattering of the defocusing quintic NLS on $\R^3$ and further applied in \cite{GinibreVelo2010,2dinteraction,Zhang2006,TaoVisanZhang} for deriving long time dynamics results for defocusing NLS on $\R^d$. In the waveguide setting, this was first used by Tzvetkov and Visciglia \cite{TzvetkovVisciglia2016} to show Theorem \ref{thm defocusing}. For focusing problems, Dodson and Murphy \cite{DodsonMurphyRadial,DodsonMurphyNonRadial} made use of some well-designed Morawetz potentials, following the same fashion as the ones from \cite{Dodson4dmassfocusing,Dodson4dfocusing}, to prove some uniform space-time bounds for $\dot{H}^{\frac{1}{2}}$-critical focusing NLS on $\R^d$, from which the large data scattering below ground states already follows. In comparison to the concentration compactness principle, the IMDM-estimates have the advantages that they provide much shorter proofs than the ones based on the concentration compactness principle and the uniform space-time bounds are given in Lebesgue norms, thus no fractional calculus is involved. We will therefore prove Theorem \ref{main result} by using the same IMDM-estimates deduced in \cite{DodsonMurphyNonRadial}. As we shall see, however, the proof in the waveguide setting is more involved and technical, since the scattering norm is involved with some fractional power $\pt_y^s$ w.r.t. the $y$-direction. We will overcome this difficulty by appealing to some well-tailored interpolation inequalities. It is also worth noting that in previous studies, the IMDM-estimates generally provided an alternative simpler and shorter proof for the large data scattering results which were originally proved using the concentration compactness principle. Our paper hence gives the first application of the IMDM-estimates where a proof based on the concentration compactness principle is so far unknown to be applicable.

For the readers' interest we also refer to the recent papers \cite{DingMorawetz,InteractionCombined} for further applications of the IMDM-estimates on focusing intercritical NLS and NLS with combined powers. We shall also make use of several ideas from these papers for our model.

\subsection{Notation and definitions}
We use the notation $A\lesssim B$ whenever there exists some positive constant $C$ such that $A\leq CB$. Similarly we define $A\gtrsim B$ and we use $A\sim B$ when $A\lesssim B\lesssim A$.

For simplicity, we ignore in most cases the dependence of the function spaces on their underlying domains and hide this dependence in their indices. For example $L_x^2=L^2(\R^d)$, $H_{x,y}^1= H^1(\R^d\times \T)$
and so on. However, when the space is involved with time, we still display the underlying temporal interval such as $L_t^pL_x^q(I)$, $L_t^\infty L_{x,y}^2(\R)$ etc. The norm $\|\cdot\|_p$ is defined by $\|\cdot\|_p:=\|\cdot\|_{L_{x,y}^p}$.

Next, we define the quantities such as mass and energy etc. that will be frequently used in the proof of the main results. For $u\in H_{x,y}^1$, define
\begin{align}
\mM(u)&:=\|u\|^2_{2},\label{def of mass}\\
\mH(u)&:=\frac{1}{2}\|\nabla_{x,y} u\|^2_{2}-\frac{1}{\alpha+2}\|u\|^{\alpha+2}_{\alpha+2},\label{def of mhu}\\
\mK(u)&:=\|\nabla_{x} u\|^2_{2}-\frac{\alpha d}{2(\alpha+2)}\|u\|^{\alpha+2}_{\alpha+2},\\
\mI(u)&:=\frac{1}{2}\|\pt_y u\|_{2}^2+\bg(\frac{1}{2}-\frac{2}{\alpha d}\bg)\|\nabla_{x}u\|_2^2=\mH(u)-\frac{2}{\alpha d}
\mK(u).\label{def of mI}
\end{align}
We also define the sets
\begin{align}
S(c)&:=\{u\in H_{x,y}^1:\mM(u)=c\},\\
V(c)&:=\{u\in S(c):\mK(u)=0\}
\end{align}
and the variational problem
\begin{align}
m_c&:=\inf\{\mH(u):u\in V(c)\}\label{def of mc}.
\end{align}
Finally, for a function $u\in H_{x,y}^1$, the scaling operator $u\mapsto u^t$ for $t\in(0,\infty)$ is defined by
\begin{align}\label{def of scaling op}
u^t(x,y):=t^{\frac d2}u(tx,y).
\end{align}

Next we introduce the concept of an \textit{admissible} pair on $\R^d$. A pair $(q,r)$ is said to be ${H}^s$-admissible for $s\in[0,\frac d2)$ if $q,r\in[2,\infty]$, $\frac{2}{q}+\frac{d}{r}=\frac{d}{2}-s$ and $(q,d)\neq(2,2)$. For any $L^2$-admissible pairs $(q_1,r_1)$ and $(q_2,r_2)$ we have the following Strichartz estimate: if $u$ is a solution of
\begin{align*}
i\pt_t u+\Delta_x u=F
\end{align*}
on $I\subset\R$ with $t_0\in I$ and $u(t_0)=u_0$, then
\begin{align}
\|u\|_{L_t^q L_x^r(I)}\lesssim \|u_0\|_{L_x^2}+\|F\|_{L_t^{q_2'} L_x^{r_2'}(I)},
\end{align}
where $(q_2',r_2')$ is the H\"older conjugate of $(q_2,r_2)$. For a proof, we refer to \cite{EndpointStrichartz,Cazenave2003}. For $d\geq 3$, we define the space $S_x$ by
\begin{align}
S_x:=L_t^\infty L_x^2\cap L^{2}_t L_x^{\frac{2d}{d-2}}\label{def of sx}.
\end{align}
For $d\in\{1,2\}$, the space $L^{2}_t L_x^{\frac{2d}{d-2}}$ in the definition of $S_x$ is replaced by
\begin{align*}
d=1:\quad L^{4}_t L_x^{\infty}\quad\text{and}\quad
d=2:\quad L^{2^+}_t L_x^{\infty^-}
\end{align*}
respectively, where $(2^{+},\infty^-)$ is an $L^2$-admissible pair with some $2^+\in(2,\infty)$ sufficiently close to $2$. In the following, an admissible pair is always referred to as an $L^2$-admissible pair if not otherwise specified.

\subsubsection*{The numbers $s_\alpha$ and $s$}
Throughout the paper we set $s_\alpha:=\frac{d}{2}-\frac{2}{\alpha}\in(0,\frac{1}{2})$ and $s$ to be some fixed number such that $s\in(\frac{1}{2},1-s_\alpha)$.

\section{Some useful inequalities and a local control result}
In this section we collect some useful tools which will be used throughout the rest of the paper.

\begin{lemma}[Strichartz estimates on $\R^d\times\T$, \cite{TzvetkovVisciglia2016}]\label{strichartz}
Let $\gamma\in\R$, $\kappa\in[0,d/2)$ and $p,q,\tilde{p},\tilde{q}$ satisfy $p,\tilde{p}\in(2,\infty]$ and
\begin{align}
\frac{2}{p}+\frac{d}{q}=\frac{2}{\tilde{p}}+\frac{d}{\tilde{q}}
=\frac{d}{2}-\kappa.
\end{align}
Then for a time interval $I\ni t_0$ we have
\begin{align}
\|e^{i(t-t_0)\dxy}f\|_{L_t^p L_x^q H^\gamma_y(I)}&\lesssim\|f\|_{{H}_x^\kappa H_y^\gamma}.
\end{align}
Moreover, the Strichartz estimate for the nonlinear term
\begin{align}
\|\int_{t_0}^t e^{i(t-s)\dxy}F(s)\,ds\|_{L_t^p L_x^q H^\gamma_y(I)}&\lesssim \|F\|_{L_t^{\tilde{p}'} L_x^{\tilde{q}'} H^\gamma_y(I)}
\end{align}
holds in the case $\kappa=0$.
\end{lemma}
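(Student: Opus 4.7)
The plan is to reduce the estimates on $\R^d\times\T$ to the corresponding Strichartz bounds on the Euclidean factor $\R^d$ by Fourier-decomposing in the periodic variable $y$, and then to pass from the homogeneous estimate to the inhomogeneous one via a $TT^*$ argument combined with the Christ--Kiselev lemma. I would begin by writing $f(x,y)=\sum_{k\in\Z}\hat{f}_k(x)e^{iky}$ and using that $\dxy$ acts on the $k$-th Fourier mode as $\Delta_x-k^2$ to obtain
\[ e^{i(t-t_0)\dxy}f(x,y)=\sum_{k\in\Z}e^{-ik^2(t-t_0)}\bigl(e^{i(t-t_0)\Delta_x}\hat{f}_k\bigr)(x)\, e^{iky}. \]

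For the homogeneous bound, the equivalence $\|g\|_{H^\gamma_y}^2\sim\sum_k(1+k^2)^\gamma|g_k|^2$, together with the fact that the time-dependent phase $e^{-ik^2(t-t_0)}$ has modulus one, would let me rewrite the target as
\[ \|e^{i(t-t_0)\dxy}f\|_{L_t^p L_x^q H^\gamma_y(I)}^2\sim \Bigl\|\sum_{k\in\Z}(1+k^2)^\gamma\bigl|e^{i(t-t_0)\Delta_x}\hat{f}_k\bigr|^2\Bigr\|_{L_t^{p/2}L_x^{q/2}(I)}. \]
Since $p,q\geq 2$, Minkowski's inequality (i.e., the triangle inequality in $L^{p/2}L^{q/2}$) lets me pull the $\ell^2_k$ sum outside, after which the fractional Euclidean Strichartz estimate applied to each mode -- the pair $(p,q)$ is $H^\kappa$-admissible on $\R^d$ by the scaling relation $\frac{2}{p}+\frac{d}{q}=\frac{d}{2}-\kappa$ -- reduces the problem to $\sum_k(1+k^2)^\gamma\|\hat{f}_k\|_{H_x^\kappa}^2\sim \|f\|_{H_x^\kappa H_y^\gamma}^2$.

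For the inhomogeneous estimate in the case $\kappa=0$ I would run the $TT^*$ argument. Dualizing the homogeneous bound applied to the pair $(\tilde p,\tilde q)$ produces
\[ \Bigl\|\int_I e^{-is\dxy}F(s)\,ds\Bigr\|_{L_x^2 H_y^\gamma}\lesssim \|F\|_{L_t^{\tilde p'}L_x^{\tilde q'}H_y^\gamma(I)}, \]
and composing with the homogeneous estimate for $(p,q)$ delivers the non-retarded inhomogeneous bound with $\int_I$ in place of $\int_{t_0}^t$. The retarded version stated in the lemma then follows from the Christ--Kiselev lemma; this last step is the reason the hypothesis excludes $p=2$, since Christ--Kiselev requires the strict inequality $p>\tilde p'$, which is guaranteed by $p,\tilde p\in(2,\infty]$. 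The main subtle point is the Minkowski step, which hinges on the Hilbertian nature of $H_y^\gamma$ together with $p,q\geq 2$; an $L_y^r$-norm with $r\neq 2$ on the target side would break this clean reduction, reflecting the well-known difficulty of proving sharp $L_y^r$-Strichartz estimates on compact manifolds -- and it is precisely the presence of the Hilbertian $H_y^\gamma$-norm that makes the waveguide Strichartz theory such a direct consequence of its Euclidean counterpart.
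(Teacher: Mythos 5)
The paper does not prove this lemma: it is stated with a citation to Tzvetkov--Visciglia \cite{TzvetkovVisciglia2016}, so there is no in-paper proof against which to compare. Your argument is nevertheless a correct and essentially standard proof of the statement. The one place where the reader might want a touch more care is the Minkowski step: you need to interchange $L_t^p L_x^q$ with $\ell_k^2$, which is two applications of Minkowski's integral inequality and uses $q\geq 2$ first to pass $L_x^q\ell_k^2\leq \ell_k^2 L_x^q$, and then $p\geq 2$ to pass $L_t^p\ell_k^2\leq \ell_k^2 L_t^p$; both hold under the stated admissibility assumptions, so the step is fine, but it is worth spelling out since the order of the two interchanges matters. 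Likewise the dualization of the homogeneous bound produces $H_y^{-\gamma}$ rather than $H_y^\gamma$ on both sides, and one should note that $\gamma$ ranges over all of $\R$ to recover the stated form; this is a cosmetic point.

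For context, the route taken in \cite{TzvetkovVisciglia2016} (and already in \cite{TNCommPDE}) is slightly different in packaging: there one first establishes the mixed-norm dispersive bound $\|e^{it\Delta_{x,y}}\|_{L_x^1 H_y^\gamma\to L_x^\infty H_y^\gamma}\lesssim |t|^{-d/2}$ (using that the $y$-propagator is unitary on $H_y^\gamma$ and the $x$-propagator decays as on $\R^d$), and then invokes the abstract Keel--Tao theorem for $H_y^\gamma$-valued functions, which delivers the homogeneous and retarded inhomogeneous estimates simultaneously, with no separate Christ--Kiselev step. Your approach --- mode-by-mode reduction to Euclidean Strichartz plus Minkowski for the homogeneous bound, then $TT^*$ composition plus Christ--Kiselev for the retarded inhomogeneous bound --- buys a more elementary reduction (you never need the abstract Keel--Tao framework, only the known $\R^d$ estimates), at the modest cost of having to quote Christ--Kiselev and of not reaching the Keel--Tao endpoint, which is harmless here because $p,\tilde p>2$ is assumed anyway. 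Both proofs hinge on exactly the structural point you emphasize at the end, namely that the $y$-norm is Hilbertian.
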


\begin{lemma}[Exotic Strichartz estimates on $\R^d\times\T$, \cite{TzvetkovVisciglia2016}]\label{exotic strichartz}
There exist $\ba,\br,\bb,\bs\in(2,\infty)$ such that
\begin{gather*}
(\alpha+1)\bs'=\br,\quad(\alpha+1)\bb'=\ba,\quad\alpha/\br<\min\{1,\frac{2}{d}\},\quad
\frac{2}{\ba}+\frac{d}{\br}=\frac{2}{\alpha}.
\end{gather*}
Moreover, for any $\gamma\in\R$ we have the following exotic Strichartz estimate:
\begin{align}
\|\int_{t_0}^t e^{i(t-s)\dxy}F(s)\,ds\|_{L_t^\ba L_x^\br H^\gamma_y(I)}&\lesssim \|F\|_{L_t^{\bb'} L_x^{\bs'} H^\gamma_y(I)}.
\end{align}
When $d\geq 5$, we can additionally assume that there exists some $0<\beta\ll 1$ such that $\br$ can be chosen as an arbitrary number from $(\frac{\alpha(\alpha+1)d}{\alpha+2},\frac{\alpha(\alpha+1)d}{\alpha+2}+\beta)$.
\end{lemma}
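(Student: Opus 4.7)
The plan is to follow the strategy from \cite{TzvetkovVisciglia2016} and reduce the estimate to an abstract Foschi-type inhomogeneous Strichartz inequality for a propagator acting on $H^\gamma_y$-valued functions of $x$. The key observation is that $e^{it\dxy}=e^{it\Delta_x}\circ e^{it\pt_y^2}$ with $e^{it\pt_y^2}$ unitary on $H^\gamma_y$, so the explicit kernel representation of $e^{it\Delta_x}$ delivers the Hilbert-valued dispersive bound
\begin{equation*}
\|e^{it\dxy}f\|_{L^\infty_x H^\gamma_y}\lesssim |t|^{-d/2}\|f\|_{L^1_x H^\gamma_y}
\end{equation*}
together with the unitary $L^2_x H^\gamma_y$-bound. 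The Foschi-type machinery (insensitive to scalar- vs.\ Hilbert-valued targets) then produces the claimed inhomogeneous estimate for every quadruple $(\ba,\br,\bb,\bs)$ satisfying the scaling identity $\frac{2}{\ba}+\frac{d}{\br}=\frac{2}{\alpha}$, the duality relations $(\alpha+1)\bs'=\br$, $(\alpha+1)\bb'=\ba$, the integrability requirements $\ba,\br,\bb,\bs\in(2,\infty)$, and the acceptability condition $\alpha/\br<\min\{1,2/d\}$.

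Next I would verify the existence of such a quadruple. The three equalities reduce the four unknowns to a one-parameter family parametrized by $\br$, namely
\begin{equation*}
\bs=\frac{\br}{\br-\alpha-1},\qquad \ba=\frac{2\alpha\br}{2\br-\alpha d},\qquad \bb=\frac{\ba}{\ba-\alpha-1}.
\end{equation*}
A direct computation shows that in the intercritical regime $\alpha\in(\frac{4}{d},\frac{4}{d-1})$, the constraints $\ba,\br,\bb,\bs>2$ together with $\br>\max\{\alpha,\alpha d/2\}$ carve out a nonempty open interval of $\br$. For the refinement when $d\geq 5$, I would plug in $\br_*:=\frac{\alpha(\alpha+1)d}{\alpha+2}$ and observe that $\ba=2(\alpha+1)$ and consequently $\bb=2$ there, so $\br_*$ is the endpoint at which the constraint $\bb>2$ just degenerates. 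Since $\br\mapsto\ba$ and $\ba\mapsto\bb$ are both monotone decreasing, $\bb$ strictly exceeds $2$ immediately to the right of $\br_*$; the remaining constraints persist by continuity (in particular $\br_*>\alpha d/2$ since $\frac{2(\alpha+1)}{\alpha+2}>1$), yielding the desired open interval $(\br_*,\br_*+\beta)$.

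The main difficulty is not really analytic, since the Foschi framework for Hilbert-valued propagators goes through verbatim, and the estimate itself rests entirely on the dispersive bound displayed above. What requires care is the exponent calculus near the degenerate endpoint $\bb=2$: one must ensure that all four constraints on $(\ba,\br,\bb,\bs)$ remain simultaneously open on a nondegenerate interval accumulating at $\br_*$ from above, which is precisely what the dimension restriction $d\geq 5$ secures through the bound $\alpha<\frac{4}{d-1}\leq 1$.
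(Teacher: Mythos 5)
The paper does not actually prove this lemma; it is quoted verbatim from \cite{TzvetkovVisciglia2016}, with the final sentence about $d\geq 5$ appended as an observation without written justification. So there is no internal proof to compare against, and the relevant question is whether your proposed derivation is sound. It is, and it takes exactly the route one would expect: the decomposition $e^{it\Delta_{x,y}}=e^{it\Delta_x}\circ e^{it\partial_y^2}$ with the torus factor unitary on $H^\gamma_y$ yields the $H^\gamma_y$-valued dispersive estimate, after which the abstract Foschi machinery (which is agnostic to whether the target is scalar or Hilbert-valued) produces the inhomogeneous estimate for any exponent quadruple satisfying the scaling and acceptability constraints. Your parametrization $\bs=\br/(\br-\alpha-1)$, $\ba=2\alpha\br/(2\br-\alpha d)$, $\bb=\ba/(\ba-\alpha-1)$ and the computation that $\br_*=\frac{\alpha(\alpha+1)d}{\alpha+2}$ gives $\ba=2(\alpha+1)$ and hence $\bb=2$ are both correct, as is the monotonicity argument showing $\bb>2$ strictly for $\br$ slightly above $\br_*$.

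The one place your reasoning misfires is the closing claim that the restriction $d\geq 5$, through $\alpha<\frac{4}{d-1}\leq 1$, is what \emph{secures} a nondegenerate interval above $\br_*$. Working through the constraints at $\br=\br_*$: $\br_*>2$ always; $\ba_*=2(\alpha+1)>2$ always; $\bs_*\in(2,\infty)$ amounts to $\frac{2}{d-1}<\alpha<\frac{4}{d-2}$, which the intercritical range $\alpha\in(\frac{4}{d},\frac{4}{d-1})$ guarantees for any $d\geq 3$; $\br_*>\alpha d/2$ reduces to $\frac{2(\alpha+1)}{\alpha+2}>1$, again automatic; and the Foschi acceptability conditions at $\br_*$ reduce to $\alpha<\frac{4}{d-1}$ and $\alpha>\alpha_+$ with $\alpha_+<\frac{4}{d}$, again automatic. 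None of these invoke $\alpha<1$, so the open interval $(\br_*,\br_*+\beta)$ exists whenever $d\geq 3$; the hypothesis $d\geq 5$ in the lemma simply reflects the scope of the paper. Where $\alpha<1$ (i.e.\ $d\geq 5$) actually bites is downstream, in the proof of Lemma \ref{lem scattering criterion}, where the auxiliary exponent $\br_2=\frac{2}{1-\alpha}$ is only defined for $\alpha<1$ and one must check $\br_2>\br$ by taking $\br$ close to $\br_*$. You should therefore either drop the attribution of $d\geq 5$ to the exponent calculus, or state explicitly that the refined choice of $\br$ is available in all intercritical dimensions and is merely recorded here for $d\geq 5$ because that is where it is used.
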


\begin{lemma}[A useful H\"older identity]\label{lemma l2 admissible special}
Let $(\tilde{\ba},\tilde{\br})=(\frac{4\br}{\alpha d},\frac{2\br}{\br-\alpha})$. Then $(\tilde{\ba},\tilde{\br})$ is an $L^2$-admissible pair and
\begin{align}\label{l2 admissible exotic}
\frac{1}{\tilde{\ba}'}=\frac{\alpha}{\ba}+\frac{1}{\tilde{\ba}},\quad\frac{1}{\tilde{\br}'}=\frac{\alpha}{\br}+\frac{1}{\tilde{\br}}
\end{align}
is satisfied.
\end{lemma}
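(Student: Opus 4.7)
The statement is a purely algebraic identity between the exotic exponents $(\ba,\br)$ from Lemma \ref{exotic strichartz} and the pair $(\tilde{\ba},\tilde{\br})$ defined in the statement, so the plan is simply a direct verification. The only input I need is the scaling relation
\begin{align*}
\tfrac{2}{\ba}+\tfrac{d}{\br}=\tfrac{2}{\alpha}
\end{align*}
provided by Lemma \ref{exotic strichartz}, which after multiplying by $\alpha/2$ can be rewritten as $\tfrac{\alpha}{\ba}=1-\tfrac{\alpha d}{2\br}$, and I will use this constantly below.

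First I would check the $L^2$-admissibility of $(\tilde{\ba},\tilde{\br})$. Substituting the definitions gives
\begin{align*}
\tfrac{2}{\tilde{\ba}}+\tfrac{d}{\tilde{\br}}=\tfrac{\alpha d}{2\br}+\tfrac{d(\br-\alpha)}{2\br}=\tfrac{d}{2},
\end{align*}
which is the scaling condition required of an $L^2$-admissible pair. To complete the admissibility check I still need $\tilde{\ba},\tilde{\br}\in[2,\infty]$ and $(\tilde{\ba},d)\neq(2,2)$; these follow from the fact that $\alpha>0$ (giving $\tilde{\br}=\tfrac{2\br}{\br-\alpha}>2$) together with the relation $\br>\alpha d/2$ extracted from $\tfrac{2}{\ba}+\tfrac{d}{\br}=\tfrac{2}{\alpha}$ and $\ba>2$, which yields $\tilde{\ba}>2$; since $d\geq 5$ we are in particular away from the forbidden endpoint.

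Finally I would verify the two H\"older identities separately. For the temporal exponent,
\begin{align*}
\tfrac{\alpha}{\ba}+\tfrac{1}{\tilde{\ba}}=\Bigl(1-\tfrac{\alpha d}{2\br}\Bigr)+\tfrac{\alpha d}{4\br}=1-\tfrac{\alpha d}{4\br}=1-\tfrac{1}{\tilde{\ba}}=\tfrac{1}{\tilde{\ba}'}.
\end{align*}
For the spatial exponent,
\begin{align*}
\tfrac{\alpha}{\br}+\tfrac{1}{\tilde{\br}}=\tfrac{\alpha}{\br}+\tfrac{\br-\alpha}{2\br}=\tfrac{1}{2}+\tfrac{\alpha}{2\br}=1-\tfrac{\br-\alpha}{2\br}=\tfrac{1}{\tilde{\br}'}.
\end{align*}
There is no genuine obstacle here; the lemma is a bookkeeping statement packaging the fact that the exotic nonlinear term $|u|^\alpha u$ can be paired, via H\"older in both variables, with an $L^2$-admissible Strichartz norm at the ``missing'' factor, and these identities are exactly what is needed to glue Lemma \ref{strichartz} and Lemma \ref{exotic strichartz} together in the subsequent arguments.
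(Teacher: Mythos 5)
Your proposal is essentially the same direct algebraic verification the paper uses; the paper simply states that the scaling relation and the two H\"older identities ``are easy to see'' and devotes its attention to checking $\tilde{\br}\in(2,2^*)$ via the monotone rewriting $\tilde{\br}=\frac{2(2\br/\alpha)}{(2\br/\alpha)-2}$, whereas you spell out the identities explicitly and get the admissibility range by first establishing $\tilde{\ba}>2$. Both routes are fine.

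One small slip in your argument: you claim $\br>\alpha d/2$ follows from $\frac{2}{\ba}+\frac{d}{\br}=\frac{2}{\alpha}$ together with $\ba>2$. That premise gives the \emph{wrong} direction — $\ba>2$ yields $\frac{2}{\ba}<1$, hence $\frac{d}{\br}>\frac{2}{\alpha}-1$, which is an \emph{upper} bound on $\br$ (when $\alpha<2$). The inequality $\br>\alpha d/2$ instead follows from $\ba<\infty$ (so $\frac{2}{\ba}>0$, hence $\frac{d}{\br}<\frac{2}{\alpha}$), or more directly from the condition $\alpha/\br<\min\{1,2/d\}$ built into Lemma \ref{exotic strichartz}, which for $d\geq 3$ reads $\br>\alpha d/2$; this is what the paper invokes. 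With that correction the rest of your proof goes through, and the final observation that $\tilde{\br}\leq 2d/(d-2)$ is then automatic from $\tilde{\ba}\geq 2$ and the scaling relation.
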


\begin{proof}
It is easy to see that \eqref{l2 admissible exotic} is satisfied for the given $(\ba,\br)$ and $\frac{2}{\ba}+\frac{d}{\br}=\frac{d}{2}$. It remains to show that $\br\in(2,2^*)$, where $2^*=\infty$ when $d=1,2$ and $2^*=\frac{2d}{d-2}$ when $d\geq 3$, which in turn implies that $(\ba,\br)$ is an $L^2$-admissible pair. We discuss different cases:
\begin{itemize}
\item For $d\in\{1,2\}$, we have $\tilde{\br}=\frac{2\br}{\br-\alpha}\in(2,\infty)$ and we obtain an admissible choice.
\item For $d\geq 3$, we may rewrite $\tilde{\br}=\frac{2\br}{\br-\alpha}$ to $\tilde{\br}=\frac{2(2\br/\alpha)}{(2\br/\alpha)-2}$. Notice that the function $l\mapsto\frac{2l}{l-2}$ is monotone decreasing on $(2,\infty)$. From Lemma \ref{exotic strichartz} we know that $2\ro/\alpha\in(d,\infty)$. Thus $\tilde{\br}\in(2,\frac{2d}{d-2})$ and we obtain an admissible choice.
\end{itemize}
\end{proof}

\begin{lemma}[Fractional calculus on $\T$, \cite{TzvetkovVisciglia2016}]\label{fractional on t}
For $s\in(0,1)$ and $\alpha>0$ we have
\begin{align}
\|u^{\alpha+1}\|_{\dot{H}_y^s}+\||u|^{\alpha+1}\|_{\dot{H}_y^s}+\||u|^\alpha u\|_{\dot{H}_y^s}\lesssim_{\alpha,s} \|u\|_{\dot{H}_y^s}\|u\|_{L_y^\infty}^\alpha
\end{align}
for $u\in H_y^s$.
\end{lemma}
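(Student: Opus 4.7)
The plan is to prove the three estimates by a single unified argument based on the Gagliardo (double-integral) characterization of the homogeneous fractional Sobolev seminorm on $\T$, combined with a uniform pointwise bound on the nonlinearity.

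First I would recall that for $s\in(0,1)$ the seminorm $\|\cdot\|_{\dot H^s_y}$ on $\T$ is equivalent to the Gagliardo seminorm
\begin{align*}
\|f\|_{\dot H^s_y}^2\sim \int_\T\int_\T \frac{|f(y)-f(y')|^2}{|y-y'|_\T^{1+2s}}\,dy\,dy',
\end{align*}
where $|y-y'|_\T$ denotes the geodesic distance on $\T$. This reduces the problem to a purely pointwise estimate on the nonlinear map.

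Second, for each of the three choices $F(z)\in\{z^{\alpha+1},\,|z|^{\alpha+1},\,|z|^\alpha z\}$ I would establish the pointwise inequality
\begin{align*}
|F(z)-F(w)|\lesssim_\alpha \bigl(|z|^\alpha+|w|^\alpha\bigr)\,|z-w|\qquad\text{for all }z,w.
\end{align*}
For $\alpha\geq 1$ the function $F$ is $C^1$ with $|F'(\zeta)|\lesssim|\zeta|^\alpha$, so the bound follows from the fundamental theorem of calculus and $|w+t(z-w)|^\alpha\lesssim |z|^\alpha+|w|^\alpha$. For $0<\alpha<1$ one argues directly: by homogeneity one may assume $|w|\leq|z|$, and the elementary real-variable inequality $|a^{\alpha+1}-b^{\alpha+1}|\lesssim\max(a,b)^\alpha|a-b|$ for $a,b\geq 0$, together with $\bigl||z|-|w|\bigr|\leq|z-w|$ and a triangle decomposition of phases, yields the claim.

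Third, I would substitute into the Gagliardo seminorm to get
\begin{align*}
\|F(u)\|_{\dot H^s_y}^2
&\lesssim \int_\T\int_\T \frac{(|u(y)|^\alpha+|u(y')|^\alpha)^2\,|u(y)-u(y')|^2}{|y-y'|_\T^{1+2s}}\,dy\,dy'\\
&\lesssim \|u\|_{L^\infty_y}^{2\alpha}\int_\T\int_\T \frac{|u(y)-u(y')|^2}{|y-y'|_\T^{1+2s}}\,dy\,dy'\sim \|u\|_{L^\infty_y}^{2\alpha}\|u\|_{\dot H^s_y}^2,
\end{align*}
which is exactly the claimed bound after taking square roots and summing over the three nonlinearities.

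The only genuinely delicate point is the pointwise Lipschitz-type estimate in the subquadratic regime $0<\alpha<1$, where $F$ fails to be $C^1$ at the origin; everything else is a direct and essentially algebraic consequence of the Gagliardo characterization, which avoids any use of Littlewood–Paley machinery or commutator estimates that would otherwise complicate matters on the compact factor $\T$.
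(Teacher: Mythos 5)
The paper states this lemma without proof, simply citing \cite{TzvetkovVisciglia2016}, so there is no in-text argument to compare against. Your proof is correct: reducing $\|\cdot\|_{\dot H^s_y(\T)}$ for $s\in(0,1)$ to the equivalent Gagliardo difference-quotient seminorm, then invoking the pointwise bound $|F(z)-F(w)|\lesssim_\alpha(|z|^\alpha+|w|^\alpha)|z-w|$ for each $F\in\{z^{\alpha+1},|z|^{\alpha+1},|z|^\alpha z\}$ and pulling $\|u\|_{L^\infty_y}^{2\alpha}$ out of the double integral, is exactly the standard (and likely the cited) route on a one-dimensional compact factor. Two small remarks: the case split at $\alpha=1$ is unnecessary, since $F(z)=|z|^\alpha z$ and $F(z)=|z|^{\alpha+1}$ are in fact $C^1$ on all of $\C$ with $|DF(\zeta)|\lesssim|\zeta|^\alpha$ for every $\alpha>0$, so the fundamental-theorem-of-calculus argument works uniformly; and your phrase ``a triangle decomposition of phases'' is a bit vague where a one-line splitting such as $|z|^\alpha z-|w|^\alpha w=|z|^\alpha(z-w)+(|z|^\alpha-|w|^\alpha)w$ would make the $0<\alpha<1$ case explicit. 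Neither affects correctness.
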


We also record a useful local control result for a solution of \eqref{nls} at the end of the section.
\begin{lemma}[Local control for a solution of \eqref{nls}]\label{local control}
Let $u$ be a global solution of \eqref{nls} with $\|u\|_{L_t^\infty H_{x,y}^1(\R)}<\infty$ and let $s\in(\frac{1}{2},1-s_\alpha)$, where $s_\alpha=\frac{d}{2}-\frac{2}{\alpha}$. Then for any $L^2$-admissible pair $(p,q)$ with $p\neq \infty$ we have
\begin{align}
\|u\|_{L_t^p W_x^{1-s,q}H_y^s(I)}\lesssim \la I\ra^{\frac{1}{p}}.
\end{align}
\end{lemma}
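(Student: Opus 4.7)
The plan is to partition $I$ into short subintervals on which a Strichartz bootstrap closes, then reassemble the pieces in time. Since the inequality is trivial when $|I|\le 1$, I may assume $|I|\ge 1$ and write $I=\bigcup_{k=1}^{N}J_k$ with $|J_k|=\tau$ for some $\tau>0$ chosen small and independent of $k$ and $|I|$, so $N\lesssim |I|$.

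On a fixed $J=J_k$ with left endpoint $t_k$, I bootstrap the exotic-admissible norm $A_J:=\|u\|_{L^{\tilde{\ba}}_t W^{1-s,\tilde{\br}}_x H^s_y(J)}$, where $(\tilde{\ba},\tilde{\br})$ is the $L^2$-admissible pair from Lemma \ref{lemma l2 admissible special}. Applying $\la\nabla_x\ra^{1-s}$ to the Duhamel representation of $u$ and invoking Lemma \ref{strichartz} with $\gamma=s$ on the admissible pair $(\tilde{\ba},\tilde{\br})$ yields
\begin{align*}
A_J\lesssim \|u(t_k)\|_{H^{1-s}_x H^s_y}+\|\la\nabla_x\ra^{1-s}(|u|^\alpha u)\|_{L^{\tilde{\ba}'}_t L^{\tilde{\br}'}_x H^s_y(J)}.
\end{align*}
The linear term is controlled by the uniform bound $\|u\|_{L^\infty_t H^1_{xy}}\lesssim 1$. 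For the nonlinear term, I combine the Hölder identity \eqref{l2 admissible exotic}, the fractional Leibniz/chain rule in the $x$-direction, and Lemma \ref{fractional on t} in the $y$-direction (both usable because $s>\frac{1}{2}$ makes $H^s_y$ a Banach algebra embedded in $L^\infty_y$) to obtain
\begin{align*}
\|\la\nabla_x\ra^{1-s}(|u|^\alpha u)\|_{L^{\tilde{\ba}'}_t L^{\tilde{\br}'}_x H^s_y(J)}\lesssim \|u\|_{L^{\ba}_t L^{\br}_x H^s_y(J)}^{\alpha}\, A_J.
\end{align*}

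The bootstrap is then closed by making the prefactor small. Indeed $\br<2^{*}$ and $s<1$ give $H^1_{xy}\hookrightarrow L^{\br}_x H^s_y$, so
\begin{align*}
\|u\|_{L^{\ba}_t L^{\br}_x H^s_y(J)}\le \tau^{1/\ba}\|u\|_{L^\infty_t L^{\br}_x H^s_y(J)}\lesssim \tau^{1/\ba}.
\end{align*}
Choosing $\tau$ small enough so that $\tau^{\alpha/\ba}$ beats the reciprocal of the absolute constant (together with a short continuity argument ensuring $A_J<\infty$ to begin with), absorption gives $A_J\lesssim 1$ uniformly in $k$.

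Finally I transfer this bound to the arbitrary $L^2$-admissible pair $(p,q)$: reapplying Lemma \ref{strichartz} on $J$ with $(p,q)$ on the left and the same dual-exotic pair on the right yields $\|u\|_{L^p_t W^{1-s,q}_x H^s_y(J)}\lesssim 1$ uniformly in $k$. Summing the $p$-th powers,
\begin{align*}
\|u\|_{L^p_t W^{1-s,q}_x H^s_y(I)}^{p}=\sum_{k=1}^{N}\|u\|_{L^p_t W^{1-s,q}_x H^s_y(J_k)}^{p}\lesssim N\lesssim |I|,
\end{align*}
and taking $p$-th roots proves the claim. The main technical delicacy is the mixed-norm nonlinear estimate: only $x$-derivatives appear but an extra $H^s_y$ factor must be preserved, which forces the $x$-fractional chain rule to be interlaced with Lemma \ref{fractional on t} in $y$. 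The precise range $s\in(\frac{1}{2},1-s_\alpha)$ with $s_\alpha<\frac{1}{2}$ is what simultaneously reconciles the Sobolev embeddings in $y$, the chain-rule admissibility in $x$, and the Hölder identity \eqref{l2 admissible exotic}.
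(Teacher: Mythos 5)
Your decomposition into unit-length subintervals and the bootstrap-then-reassemble structure is the same as the paper's, but the bootstrap quantity you chose creates two genuine problems that the paper avoids by a different choice.

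First, the absorption step requires $\|u\|_{L^{\ba}_t L^{\br}_x H^s_y(J)}\lesssim \tau^{1/\ba}$, for which you invoke the embedding $H^1_{x,y}\hookrightarrow L^{\br}_x H^s_y$ justified by ``$\br<2^*$.'' This is false for the exotic exponent $\br$ in the regime the paper treats. From Lemma \ref{exotic strichartz}, $\br$ lies just above $\frac{\alpha(\alpha+1)d}{\alpha+2}$, and one checks that $\frac{\alpha(\alpha+1)d}{\alpha+2}>\frac{2d}{d-2}$ precisely when $(d-2)\alpha^2+(d-4)\alpha-4>0$; e.g.\ for $d=6$ and $\alpha$ close to $4/(d-1)=4/5$ the left-hand side is positive, so $\br>2^*$. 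Even setting $2^*$ aside, after peeling off $H^s_y$ the relevant $x$-embedding would be $H_x^{1-s}\hookrightarrow L_x^{\br}$, whose critical exponent $\frac{2d}{d-2(1-s)}$ is only slightly above $\frac{\alpha d}{2}$ (since $1-s>s_\alpha$ is the only constraint), whereas $\br$ is always strictly larger than $\frac{\alpha d}{2}$ by a margin that is not small near $\alpha=4/(d-1)$. So the coercivity input $\|u\|_{L^\infty_t L^{\br}_x H^s_y}\lesssim 1$ is simply not available. The paper sidesteps this by bootstrapping with a different exponent $\br_s>\frac{\alpha d}{2}$ taken \emph{as close as needed} to $\frac{\alpha d}{2}$, so that $H_x^{1-s}\hookrightarrow L_x^{\br_s}$ is guaranteed by $1-s>s_\alpha$; the exotic pair $(\ba,\br)$ plays no role in Lemma \ref{local control}.

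Second, the nonlinear estimate you write, $\|\langle\nabla_x\rangle^{1-s}(|u|^\alpha u)\|_{L^{\tilde\ba'}_tL^{\tilde\br'}_xH^s_y}\lesssim\|u\|_{L^\ba_tL^\br_xH^s_y}^\alpha A_J$, asks for a fractional chain rule in $x$ that acts \emph{inside} an $H^s_y$-valued mixed norm. Lemma \ref{fractional on t} handles the $y$-fractional part, but it does not commute with $\langle\nabla_x\rangle^{1-s}$, and there is no standard mixed-norm fractional Leibniz rule of this form at hand (indeed the paper emphasizes in the introduction that such a stability/chain-rule tool for $\R^d\times\T$ is exactly what is missing in $d\geq 5$). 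The paper avoids this entirely: it bootstraps the \emph{integer}-order quantity $\sum_{D\in\{1,\pt_y,\nabla_x\}}\|Du\|_{L^p_tL^q_xL^2_y}+\ldots$, where the Leibniz rule is exact and only H\"older plus $H^s_y\hookrightarrow L^\infty_y$ are needed, and then recovers the $W^{1-s,q}_xH^s_y$ norm at the end via the interpolation inequality \eqref{amann inter}. You should replace your fractional bootstrap by this integer-derivative bootstrap (or supply a genuine proof of the mixed-norm fractional chain rule you are using), and replace $\br$ by a suitable $\br_s$ close to $\frac{\alpha d}{2}$.
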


\begin{proof}
Let $\br_s>\frac{\alpha d}{2}$ be a number sufficiently close to $\frac{\alpha d}{2}$ such that $H_x^{1-s}\hookrightarrow L_x^{\br_s}$. This is always possible since $1-s>s_\alpha$ and $H_x^{s_\alpha}\hookrightarrow L_x^{\frac{\alpha d}{2}}$. Let also $\ba_s$ be given such that $\frac{2}{\ba_s}+\frac{d}{\br_s}=\frac{2}{\alpha}$. Notice that
\begin{align}
\frac{1}{(2d/(d+2))}=\frac{\alpha}{(\alpha d/2)}+\frac{1}{(2d/(d-2))},\quad
\frac{1}{2}=\frac{\alpha}{\infty}+\frac{1}{2},\quad\frac{2}{\infty}+\frac{d}{(\alpha d/2)}=\frac{2}{\alpha}.
\end{align}
Hence when $\br_s$ is sufficiently close to $\frac{\alpha d}{2}$, we can find an $L^2$-admissible pair $(p_s,q_s)$ such that
\begin{align}
\frac{1}{(2d/(d+2))}=\frac{\alpha}{\br_s}+\frac{1}{q_s},\quad
\frac{1}{2}=\frac{\alpha}{\ba_s}+\frac{1}{p_s}.
\end{align}
Let
$$X:=\sum_{D\in\{1,\pt_y,\nabla_x\}}\|Du\|_{L_t^p L_x^q L_y^2(I)}+\sum_{D\in\{1,\pt_y,\nabla_x\}}\|Du\|_{L_t^{p_s} L_x^{q_s} L_y^2(I)}.$$
Using Strichartz, H\"older and the embedding $H_y^s\hookrightarrow L_y^\infty$ we obtain
\begin{align}
X&\lesssim 1+\sum_{D\in\{1,\pt_y,\nabla_x\}}\|D(|u|^\alpha u)\|_{L_t^{2}L_x^{\frac{2d}{d+2}} L_y^2(I)}\nonumber\\
&\lesssim 1+\sum_{D\in\{1,\pt_y,\nabla_x\}}\|\|u\|^\alpha_{L_x^{\br_s}H_y^s}\|Du\|_{L_x^{q_s}L_y^2}\|_{L_t^{2}(I)}\nonumber\\
&\lesssim 1+|I|^{\frac{\alpha}{\ba_s}}\|u\|^\alpha_{L_t^\infty L_x^{\br_s}H_y^s(I)}\bg(\sum_{D\in\{1,\pt_y,\nabla_x\}}\|Du\|_{L_t^{p_s} L_x^{q_s}L_y^2(I)}\bg)\nonumber\\
&\lesssim 1+|I|^{\frac{\alpha}{\ba_s}}\|u\|^\alpha_{L_t^\infty H_x^{1-s}H_y^s(I)}X
\lesssim 1+|I|^{\frac{\alpha}{\ba_s}}\|u\|^\alpha_{L_t^\infty H_{x,y}^{1}(I)}X\lesssim 1+|I|^{\frac{\alpha}{\ba_s}}X.
\end{align}
The desired claim then follows from a standard continuity argument and the interpolation
\begin{align}
\|f\|_{W_x^{1-s,r}H_y^s}\lesssim \|f\|_{L_x^r H_y^1}^s\|f\|_{W_x^{1,r} L_y^2}^{1-s}\label{amann inter}
\end{align}
for $r\in(1,\infty)$ (which can for instance be deduced using \cite[Thm. 3.1]{AmannEmbedding} by replacing the real interpolation therein to the complex one).
\end{proof}

\section{Scattering criterion}
In this section we give a scattering criterion for a solution of \eqref{nls}. As we shall see, such scattering criterion will follow from the IMDM-estimate deduced from Lemma \ref{lem inter morawetz} below.

\begin{lemma}[Scattering criterion]\label{lem scattering criterion}
Let $u$ be a global solution of \eqref{nls} and assume that $\|u\|_{L_t^\infty H_{x,y}^1(\R)}\leq A$. Then for any $\sigma>0$ there exist $\vare=\vare(\sigma,A)$ sufficiently small and $T_0=T_0(\sigma,\vare,A)$ sufficiently large such that if for all $a\in\R$ there exists $T\in(a,a+T_0)$ such that $[T-\vare^{-\sigma},T]\subset(a,a+T_0)$ and
\begin{align}
\|u\|_{L_t^\ba L_x^\br H_y^s(T-\vare^{-\sigma},T)}\lesssim \vare,\label{4.1}
\end{align}
then $u$ scatters forward in time.
\end{lemma}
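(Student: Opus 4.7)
The plan is to follow the Tao-Dodson-Murphy scattering-criterion paradigm, adapted to the waveguide setting: reduce scattering to the finiteness of an exotic Strichartz norm on a semi-infinite time interval, exploit the hypothesis via a Duhamel decomposition centered on the small-norm window, and close an exotic Strichartz bootstrap with the aid of Lemma \ref{exotic strichartz} and Lemma \ref{fractional on t}.

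First I would recall that forward $H^1_{x,y}$ scattering reduces to proving $\|u\|_{L_t^\ba L_x^\br H_y^s([T,\infty))}<\infty$ for some $T>0$, which follows from the Duhamel representation for $e^{-it\Delta_{x,y}}u(t)$ combined with the exotic Strichartz estimate. Given $\sigma$ and $A$, I choose $\vare=\vare(\sigma,A)$ small and $T_0=T_0(\sigma,\vare,A)$ large, in that order, and invoke the hypothesis at some sufficiently large $a\in\R$ (to be fixed later) to extract $T\in(a,a+T_0)$ and the window $[T_*,T]:=[T-\vare^{-\sigma},T]\subset(a,a+T_0)$ on which $\|u\|_{L_t^\ba L_x^\br H_y^s([T_*,T])}\lesssim\vare$. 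From Duhamel starting at $T_*$, I would write the outer solution on $[T,\infty)$ as the sum of the linear evolution $e^{i(t-T_*)\Delta_{x,y}}u(T_*)$, the nonlinear contribution over the window $[T_*,T]$, and the nonlinear contribution over $[T,t]$. Applying exotic Strichartz with the identities $(\alpha+1)\bb'=\ba$, $(\alpha+1)\bs'=\br$ coming from Lemma \ref{exotic strichartz}, together with Lemma \ref{fractional on t} and the embedding $H_y^s\hookrightarrow L_y^\infty$ (valid since $s>\tfrac12$), the window contribution is bounded by $C\vare^{\alpha+1}$, while the outer nonlinear term is bounded by $CX^{\alpha+1}$, where $X:=\|u\|_{L_t^\ba L_x^\br H_y^s([T,\infty))}$. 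This gives the bootstrap
\begin{equation*}
X\leq L+C\vare^{\alpha+1}+CX^{\alpha+1},
\end{equation*}
with $L:=\|e^{i(t-T_*)\Delta_{x,y}}u(T_*)\|_{L_t^\ba L_x^\br H_y^s([T,\infty))}$. A standard continuity argument on $X$ viewed as a function of the right endpoint of its defining interval then yields $X<\infty$, provided $L$ is below the continuity threshold $\sim(C(\alpha+1))^{-1/\alpha}$.

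The main obstacle is making $L$ quantitatively small. Lemma \ref{strichartz} with $\kappa=s_\alpha$ combined with $H_{x,y}^1\hookrightarrow H_x^{s_\alpha}H_y^s$ (valid since $s_\alpha+s<1$) only gives the uniform bound $L\lesssim A$, which need not be subcritical. To push $L$ below the threshold, I would split $u(T_*)$ dyadically in $x$-frequency as $u(T_*)=P_{\leq N}u(T_*)+P_{>N}u(T_*)$. The high-frequency piece has $H_x^{s_\alpha}H_y^s$-norm $\lesssim N^{s_\alpha-1}A$, small uniformly in $T_*$ for large $N$, while for the low-frequency piece the $L_t^\ba$-integrability of the linear Strichartz flow combined with dispersion in the $x$-direction makes its exotic tail on $[\vare^{-\sigma},\infty)$ arbitrarily small once $\vare^{-\sigma}$ is chosen large relative to $N$ and $A$; the lack of dispersion in the periodic direction is irrelevant here since the $H^s_y$-regularity passes through as a frozen parameter. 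An alternative, arguably cleaner route is to invoke the mixed H\"older decomposition of Lemma \ref{lemma l2 admissible special} to rewrite the outer nonlinear contribution as $C\|u\|_{L_t^\ba L_x^\br H_y^s([T,\infty))}^\alpha\cdot\|u\|_{L_t^{\tilde{\ba}}L_x^{\tilde{\br}}H_y^s([T,\infty))}$, where the second factor is controlled by Lemma \ref{local control} combined with a standard $L^2$-admissible Strichartz bootstrap, thereby absorbing the need to quantify $L$ at the price of a more delicate two-norm bootstrap. The technical heart of the proof---propagating the $H^s_y$-regularity carefully through the Leibniz computations on the torus---is where I expect the bulk of the work to lie.
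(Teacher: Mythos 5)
Your overall architecture matches the paper's: reduce scattering to smallness of the exotic Strichartz norm of the free evolution of $u(T)$, then Duhamel from $0$, isolate the recent window $[T-\vare^{-\sigma},T]$ (which is $\lesssim\vare^{\alpha+1}$ exactly as you say), and handle the far past. You also correctly identify that the obstruction is making the far-past contribution $L\approx\|e^{i(t-T_*)\Delta_{x,y}}u(T_*)\|_{L_t^\ba L_x^\br H_y^s([T,\infty))}$ quantitatively small rather than merely $O(A)$. However, both of the fixes you propose have genuine gaps, and you miss the mechanism the paper actually uses.

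The frequency-splitting route does not close. For the low-frequency piece $P_{\le N}u(T_*)$, the relevant dispersive bound for $L^\br_x$ with $\br>2$ requires control of an $L^{\br'}_x$ norm with $\br'<2$, and frequency localization gives no passage from $L^2_x$ down to $L^{\br'}_x$ (Bernstein only goes up in integrability); dyadic truncation of a generic uniformly-bounded $H^1$ family therefore does not supply the sub-$L^2$ integrability you need, nor a rate of decay for the Strichartz tail that is uniform in $T_*$. The alternative route via Lemma \ref{lemma l2 admissible special} reshuffles the outer nonlinear term but leaves $L$ entirely untouched, so it does not address the problem at all. What the paper actually does is keep two representations of the far-past term in play simultaneously: the linear form $F_1(t)=e^{i(t-T_*)\Delta_{x,y}}u(T_*)-e^{it\Delta_{x,y}}u_0$ gives a uniform (not small) Strichartz bound at an $L^2$-admissible index $\br_1<\br$, while the integral form $F_1(t)=\int_0^{T_*}e^{i(t-s)\Delta_{x,y}}(|u|^\alpha u)(s)\,ds$ exposes the crucial extra spatial integrability of the nonlinearity: $|u|^\alpha u\in L_x^{\br_2'}H_y^s$ with $\br_2'<2$ (indeed with $\br_2=\tfrac{2}{1-\alpha}$ one has $(\alpha+1)\br_2'=2$ and the bound reduces to $\|u\|_{H^1_{x,y}}^{\alpha+1}$), so the dispersive estimate gives $\|F_1(t)\|_{L_x^{\br_2}H_y^s}\lesssim(t-T+\vare^{-\sigma})^{1-\frac{d}{2}(1-2/\br_2)}$. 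Interpolating these two bounds with $\br_1<\br<\br_2$, and integrating the decaying tail over $[T,\infty)$, produces the needed power $\vare^{\sigma(\cdots)}$. This exploitation of the nonlinear structure (not available for a generic linear flow of $H^1$ data) is exactly what your proposal lacks, and is the reason the time separation $\vare^{-\sigma}$ translates into quantitative smallness.
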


\begin{proof}
By small data theory (see for instance \cite[Lem. 4.6]{Luo_inter}), it suffices to show that there exists some $T\gg 1$ such that
\begin{align}
\|e^{i(t-T)\Delta_{x,y}}u(T)\|_{L_t^\ba L_x^\br H_y^s(T,\infty)}\lesssim\vare^\mu
\end{align}
for some $\mu>0$. Using Duhamel's formula we have
$$e^{i(t-T)\Delta_{x,y}}u(T)=e^{it\Delta_{x,y}}u_0+i\int_0^T e^{i(t-s)\Delta_{x,y}}(|u|^\alpha u)(s)\,ds. $$
First, using Strichartz we obtain
\begin{align}
\|e^{it\Delta_{x,y}}u\|_{L_t^\ba L_x^\br H_y^s(\R)}\lesssim \|u_0\|_{H_x^{s_\alpha} H_y^s}\lesssim \|u_0\|_{H_{x,y}^1}<\infty.
\end{align}
Thus we can find some large $T$ such that
\begin{align}\|e^{it\Delta_{x,y}}u\|_{L_t^\ba L_x^\br H_y^s(T,\infty)}\lesssim \vare.\end{align}
Next, rewrite
\begin{align}
\int_0^T e^{i(t-s)\Delta_{x,y}}(|u|^\alpha u)(s)\,ds&=
\int_0^{T-\vare^{-\sigma}} e^{i(t-s)\Delta_{x,y}}(|u|^\alpha u)(s)\,ds
+\int_{T-\vare^{-\sigma}}^{T} e^{i(t-s)\Delta_{x,y}}(|u|^\alpha u)(s)\,ds\nonumber\\
&=:F_1(t)+F_2(t).
\end{align}
By Strichartz and Lemma \ref{fractional on t}, we have
\begin{align}
\|F_2\|_{L_t^\ba L_x^\br H_y^s(T,\infty)}\lesssim \||u|^\alpha u\|_{L_t^{\br'}L_x^{\bs'}H_y^s(T-\vare^{-\sigma},T)}
\lesssim \|u\|_{L_t^\ba L_x^\br H_y^s(T-\vare^{-\sigma},T)}^{\alpha+1}\lesssim \vare^{\alpha+1}.
\end{align}
We let $\br_1<\br$ be given such that $(\ba,\br_1)$ is an $L^2$-admissible pair. Let $\br_2>\br$ and $\theta\in(0,1)$ be some constants to be determined later such that
\begin{align}
\br^{-1}=\theta\br_1^{-1}+(1-\theta)\br_2^{-1}.
\end{align}
By interpolation we therefore have
\begin{align}
\|F_1\|_{L_t^\ba L_x^\br H_y^s(T,\infty)}\leq \|F_1\|_{L_t^\ba L_x^{\br_1} H_y^s(T,\infty)}^\theta\|F_1\|_{L_t^\ba L_x^{\br_2} H_y^s(T,\infty)}^{1-\theta}.
\end{align}
Notice that
$$ F_1(t)=e^{i(t-T+\vare^{-\sigma})\Delta_{x,y}}u(T-\vare^{-\sigma})-e^{it\Delta_{x,y}}u_0.$$
Thus by Strichartz and the uniform $H_{x,y}^1$-boundedness of $u$ we obtain
\begin{align}
\|F_1\|_{L_t^\ba L_x^{\br_1} H_y^s(T,\infty)}\lesssim 1.
\end{align}
We now construct a suitable $\br_2$. Notice first that by Sobolev embedding, $\br_2'(\alpha+1)\in\bg[2,\frac{\alpha d}{2}\bg]$ will in turn implies $H_x^{s_\alpha}\hookrightarrow L_x^{\br_2'(\alpha+1)}$. Combining with dispersive estimate, Minkowski and Lemma \ref{fractional on t} we deduce
\begin{align*}
&\,\|F_1(t)\|_{L_x^{\br_2}H_y^s}
\lesssim \|F_1(t)\|_{L_x^{\br_2}L_y^2}+\|\pt_y^s F_1(t)\|_{L_x^{\br_2}L_y^2}
\lesssim \|F_1(t)\|_{L_y^2 L_x^{\br_2}}+\|\pt_y^s F_1(t)\|_{L_y^2 L_x^{\br_2}}\nonumber\\
\lesssim&\, \bg\|\int_0^{T-\vare^{-\sigma}}(t-s)^{-\frac{d}{2}(1-\frac{2}{\br_2})}\|e^{i(t-s)\Delta_y}|u(s)|^\alpha u(s)\|_{L_x^{\br_2'}}\,ds\bg\|_{L_y^2}\nonumber\\
+&\,\bg\|\int_0^{T-\vare^{-\sigma}}(t-s)^{-\frac{d}{2}(1-\frac{2}{\br_2})}\|e^{i(t-s)\Delta_y}\pt_y^s(|u(s)|^\alpha u(s))\|_{L_x^{\br_2'}}\,ds\bg\|_{L_y^2}\nonumber\\
\lesssim&\, \int_0^{T-\vare^{-\sigma}}(t-s)^{-\frac{d}{2}(1-\frac{2}{\br_2})}\||u(s)|^\alpha u(s)\|_{L_x^{\br_2'}L_y^2}\,ds
+\int_0^{T-\vare^{-\sigma}}(t-s)^{-\frac{d}{2}(1-\frac{2}{\br_2})}\|\pt_y^s(|u(s)|^\alpha u(s))\|_{L_x^{\br_2'}L_y^2}\,ds\nonumber\\
\lesssim&\, \int_0^{T-\vare^{-\sigma}}(t-s)^{-\frac{d}{2}(1-\frac{2}{\br_2})}\|u(s)\|^{\alpha+1}_{L_x^{(\alpha+1)\br_2'}H_y^s}\,ds\nonumber\\
\lesssim&\, \int_0^{T-\vare^{-\sigma}}(t-s)^{-\frac{d}{2}(1-\frac{2}{\br_2})}\|u(s)\|^{\alpha+1}_{L_y^2L_x^{(\alpha+1)\br_2'}}\,ds
+\int_0^{T-\vare^{-\sigma}}(t-s)^{-\frac{d}{2}(1-\frac{2}{\br_2})}\|\pt_y^s u(s)\|^{\alpha+1}_{L_y^2L_x^{(\alpha+1)\br_2'}}\,ds\nonumber\\
\lesssim&\, \int_0^{T-\vare^{-\sigma}}(t-s)^{-\frac{d}{2}(1-\frac{2}{\br_2})}\|u(s)\|^{\alpha+1}_{H_y^s H_x^{s_\alpha}}\,ds
\lesssim \int_0^{T-\vare^{-\sigma}}(t-s)^{-\frac{d}{2}(1-\frac{2}{\br_2})}\|u(s)\|^{\alpha+1}_{H_{x,y}^1}\,ds\nonumber\\
\lesssim &\,(t-T+\vare^{-\sigma})^{-\frac{d}{2}(1-\frac{2}{\br_2})+1},
\end{align*}
provided that
\begin{align}\label{cond1}
(\alpha+1)\br_2'\in\bg[2,\frac{\alpha d}{2}\bg],\quad\frac{d}{2}\bg(1-\frac{2}{\br_2}\bg)-1>0.
\end{align}
This in turn implies
\begin{align}
\|F_1\|_{L_t^\ba L_x^{\br_2} H_y^s(T,\infty)}\lesssim \bg(\int_T^\infty (t-T+\vare^{-\sigma})^{-\bg(\frac{d}{2}\bg(1-\frac{2}{\br_2}\bg)-1\bg){\ba}}\bg)^{\frac{1}{\ba}}\lesssim
\vare^{\sigma\bg(\frac{d}{2}\bg(1-\frac{2}{\br_2}\bg)-1-\frac{1}{\ba}\bg)},
\end{align}
provided that
\begin{align}\label{cond2}
\frac{d}{2}\bg(1-\frac{2}{\br_2}\bg)-1-\frac{1}{\ba}>0.
\end{align}
Inspired by \cite{DingMorawetz} we set $\br_2=\frac{2}{1-\alpha}$. We show that by this choice of $\br_2$, \eqref{cond1}, \eqref{cond2} and $\br_2>\br$ are satisfied when $\br$ is sufficiently close to $\frac{\alpha(\alpha+1)d}{\alpha+2}$ (which is allowable by Lemma \ref{exotic strichartz}). One easily verifies that $(\alpha+1)\br_2'=2<\frac{\alpha d}{2}$, since $\alpha>4/d$. Next, we have
\begin{align}
\frac{d}{2}\bg(1-\frac{2}{\br_2}\bg)-1>\frac{d}{2}\bg(1-\frac{2}{\br_2}\bg)-1-\frac{1}{\ba}
=\frac{\alpha d}{2}-1-\frac{1}{\ba}>1-\frac{1}{\ba}>0,
\end{align}
since $\ba>2$. It remains to prove $\br_2>\br$. In fact, we only need to show that $\br_2>\frac{\alpha(\alpha+1)d}{\alpha+2}$ since we can choose $\br$ sufficiently close to $\frac{\alpha(\alpha+1)d}{\alpha+2}$. One easily verifies
\begin{align}
\br_2=\frac{2}{1-\alpha}>\frac{\alpha(\alpha+1)d}{\alpha+2}
\Leftrightarrow d\alpha^3+(2-d)\alpha +4>0.
\end{align}
Define $f(\alpha):=d\alpha^3+(2-d)\alpha +4$. Then calculating the derivative we obtain that $f$ is monotone decreasing on $[0,((3d)^{-1}(d-2))^{\frac{1}{2}}]$ and increasing on $[((3d)^{-1}(d-2))^{\frac{1}{2}},\infty)$. Define therefore $\alpha_{\min}:=((3d)^{-1}(d-2))^{\frac{1}{2}}$. By fundamental calculation we infer that
\begin{align*}
f(4/d)=\frac{8d+64}{d^2}>0,\quad f(4/(d-1))=\frac{64d+4(d-1)^2}{(d-1)^3}>0.
\end{align*}
Hence if $\alpha_{\min}\leq 4/d$ or $\alpha_{\min}\geq 4/(d-1)$ then we are done. Otherwise we have $\alpha_{\min}\in(4/d,4/(d-1))$. Noticing that $\alpha_{\min}$ is a monotone increasing function in $d$, while $4/(d-1)$ is monotone decreasing. Thus when $\alpha_{\min}<4/(d-1)$ it is necessary that $ d\leq 8$. However,
\begin{align}
((3d)^{-1}(d-2))^{\frac{1}{2}}>4/d\Rightarrow d>8
\end{align}
and hence the underlying case is absurd. The desired proof is therefore complete.
\end{proof}

\section{Variational analysis and IMDM-estimates}
This section is devoted to the variational analysis for the NLS \eqref{nls} and the IMDM-estimates based on the derived variational results. First, let $\chi\in C_0^\infty(\R^d;[0,1])$ be a radially symmetric and decreasing cut-off function such that $\chi(|x|)=1$ for $|x|\leq 1-\eta$ and $\chi(|x|)=0$ for $|x|\geq 1$, where $0<\eta\ll 1$ is some to be determined small positive number. For $R>0$, we also write $\chi_R(x):=\chi(x/R)$. Moreover, we define the set
\begin{align}
\mathcal{A}:=\{u\in S(c):\mH(u)<m_c,\,\mK(u)>0\}.
\end{align}

We begin with some useful results proved in \cite{Luo_inter}.

\begin{lemma}[An alternative characterization of $m_c$, \cite{Luo_inter}]\label{lem tilde mc}
Let
$$\tilde{m}_c:=\inf\{\mI(u):u\in S(c),\,\mK(u)\leq 0\}.$$
Then $m_c=\tilde{m}_c$ for all $c\in(0,\infty)$.
\end{lemma}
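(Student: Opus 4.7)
The claim pairs the two functionals $\mH$ and $\mI$ via the identity $\mI(u)=\mH(u)-\frac{2}{\alpha d}\mK(u)$. Thus on $V(c)$ (where $\mK$ vanishes) we have $\mH=\mI$, which immediately gives $\tilde m_c\le m_c$ by restricting the larger infimum-set to $V(c)$. The real content is the reverse inequality, for which I will use a one-parameter $x$-scaling argument in the spirit of Jeanjean/Bellazzini-Jeanjean and the paper \cite{Luo_inter} itself.

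Fix $u\in S(c)$ with $\mK(u)\le 0$ and consider the mass-preserving scaling $u^t(x,y)=t^{d/2}u(tx,y)$ from \eqref{def of scaling op}. A direct change of variables gives
\begin{align*}
\|\nabla_x u^t\|_2^2=t^2\|\nabla_x u\|_2^2,\quad \|\pt_y u^t\|_2^2=\|\pt_y u\|_2^2,\quad \|u^t\|_{\alpha+2}^{\alpha+2}=t^{\alpha d/2}\|u\|_{\alpha+2}^{\alpha+2},
\end{align*}
so that, writing $A:=\|\nabla_x u\|_2^2$ and $B:=\frac{\alpha d}{2(\alpha+2)}\|u\|_{\alpha+2}^{\alpha+2}$,
\begin{align*}
\mK(u^t)=t^2 A-t^{\alpha d/2}B.
\end{align*}
Because $\alpha>\frac4d$ we have $\frac{\alpha d}{2}>2$, so $t\mapsto \mK(u^t)$ is positive near $0$, tends to $-\infty$ as $t\to\infty$, and (up to the trivial case $u\equiv 0$) has a unique zero $t_0=(A/B)^{1/(\alpha d/2-2)}$. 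The hypothesis $\mK(u)\le 0$ forces $A\le B$, hence $t_0\in(0,1]$.

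The decisive observation is that the functional $\mI$ takes a particularly clean form under this scaling:
\begin{align*}
\mI(u^t)=\frac12\|\pt_y u\|_2^2+\Bigl(\frac12-\frac{2}{\alpha d}\Bigr)t^2\|\nabla_x u\|_2^2.
\end{align*}
Again the intercritical assumption $\alpha>\frac4d$ makes the coefficient $\frac12-\frac{2}{\alpha d}$ strictly positive, so $t\mapsto \mI(u^t)$ is monotone nondecreasing in $t$. Combining this with $t_0\le 1$ and the identity $\mI=\mH$ on $V(c)$ yields
\begin{align*}
m_c\le \mH(u^{t_0})=\mI(u^{t_0})\le \mI(u^1)=\mI(u).
\end{align*}
Taking the infimum over the admissible set produces $m_c\le \tilde m_c$, and together with the trivial inequality the equality follows.

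There is no real obstacle here; the argument is entirely variational and uses only the algebraic structure of $\mH,\mK,\mI$. The one point one must be careful about is checking that the signs of the exponents really cooperate: the intercritical restriction $\alpha\in(\frac4d,\frac4{d-1})$ enters precisely twice, once to guarantee that $\mK(u^t)$ has a unique zero and once to guarantee that $\mI(u^t)$ is monotone in $t$. The $y$-derivative term plays no role in the scaling and simply rides along as a constant, which is exactly why replacing $\mH$ by $\mI$ in the definition of the minimum does not change the value.
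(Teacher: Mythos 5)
Your argument is correct. The paper itself only cites this lemma from \cite{Luo_inter} without reproducing the proof, but your scaling argument is the natural one and is the same device the paper itself records in Lemma \ref{monotoneproperty}: the mass-preserving $x$-scaling $u^t=t^{d/2}u(tx,y)$ leaves $\mI$ monotone nondecreasing in $t$ (the $\pt_y$ part is inert, the $\nabla_x$ part scales like $t^2$ with a positive coefficient since $\alpha>4/d$), while $\mK(u^t)=t^2A-t^{\alpha d/2}B$ crosses zero at a unique $t_0\le 1$ whenever $\mK(u)\le 0$. Together with $\mH=\mI$ on $V(c)$ this gives $m_c\le\mI(u^{t_0})\le\mI(u)$, and the easy direction $\tilde m_c\le m_c$ comes from restricting to $V(c)$. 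One side remark: both places where you invoke the intercritical range actually use only the mass-supercritical lower bound $\alpha>4/d$ (once for $\alpha d/2>2$ and once for $\tfrac12-\tfrac{2}{\alpha d}>0$); the upper bound $\alpha<4/(d-1)$ plays no role in this lemma, so the statement and your proof are valid for the full mass-supercritical, energy-subcritical range. Also, the fact that $A=\|\nabla_x u\|_2^2>0$ for nonzero $u\in S(c)$ (which you implicitly use when forming $t_0=(A/B)^{1/(\alpha d/2-2)}$) is automatic here because an $x$-independent function cannot lie in $L^2(\R^d\times\T)$ unless it vanishes, but it is worth noting explicitly.
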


\begin{lemma}[Property of the mapping $t\mapsto \mK(u^t)$, \cite{Luo_inter}]\label{monotoneproperty}
Let $c>0$ and $u\in S(c)$. Then the following statements hold true:
\begin{enumerate}
\item[(i)] $\frac{\partial}{\partial t}\mH(u^t)=t^{-1} Q(u^t)$ for all $t>0$.
\item[(ii)] There exists some $t^*=t^*(u)>0$ such that $u^{t^*}\in V(c)$.
\item[(iii)] We have $t^*<1$ if and only if $\mK(u)<0$. Moreover, $t^*=1$ if and only if $\mK(u)=0$.
\item[(iv)] Following inequalities hold:
\begin{equation*}
Q(u^t) \left\{
\begin{array}{lr}
             >0, &t\in(0,t^*) ,\\
             <0, &t\in(t^*,\infty).
             \end{array}
\right.
\end{equation*}
\item[(v)] $\mH(u^t)<\mH(u^{t^*})$ for all $t>0$ with $t\neq t^*$.
\end{enumerate}
\end{lemma}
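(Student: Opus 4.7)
The plan is to reduce everything to the elementary analysis of a single scalar function, with the decisive analytic input being intercriticality $\alpha > 4/d$. First I would record the scaling behaviour of each ingredient: from $u^t(x,y) = t^{d/2} u(tx,y)$ and the change of variable $x' = tx$ one reads off $\|u^t\|_2 = \|u\|_2$ (so $u^t \in S(c)$), $\|\pt_y u^t\|_2 = \|\pt_y u\|_2$, $\|\nabla_x u^t\|_2^2 = t^2 \|\nabla_x u\|_2^2$, and $\|u^t\|_{\alpha+2}^{\alpha+2} = t^{d\alpha/2} \|u\|_{\alpha+2}^{\alpha+2}$. Substituting these into the definitions gives
\[
\mH(u^t) = \tfrac{1}{2} t^2 \|\nabla_x u\|_2^2 + \tfrac{1}{2} \|\pt_y u\|_2^2 - \tfrac{1}{\alpha+2}\, t^{\alpha d/2} \|u\|_{\alpha+2}^{\alpha+2},
\]
\[
\mK(u^t) = t^2 \|\nabla_x u\|_2^2 - \tfrac{\alpha d}{2(\alpha+2)}\, t^{\alpha d/2} \|u\|_{\alpha+2}^{\alpha+2}.
\]
Differentiating the first identity in $t$ and factoring out $t^{-1}$ yields $\pt_t \mH(u^t) = t^{-1} \mK(u^t)$, which is part (i).

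Next, setting $a := \|\nabla_x u\|_2^2$ and $b := \tfrac{\alpha d}{2(\alpha+2)} \|u\|_{\alpha+2}^{\alpha+2}$ and excluding the trivial case $u \equiv 0$, I would analyse the scalar map
\[
g(t) := \mK(u^t) = t^2 \bigl( a - b\, t^{\alpha d/2 - 2} \bigr).
\]
The key observation is that $\alpha > 4/d$ forces $\alpha d/2 - 2 > 0$, so the bracket is strictly decreasing from $a > 0$ at $t = 0^{+}$ to $-\infty$ as $t \to \infty$. This produces a unique $t^{*} = (a/b)^{2/(\alpha d - 4)} > 0$ with $g(t^{*}) = 0$ (giving (ii)), and the sign of $g$ on $(0,t^{*})$ and $(t^{*},\infty)$ is immediately read off (giving (iv)). Evaluating at $t = 1$ gives $\mK(u) = a - b$, and the relation $a = b\,(t^{*})^{\alpha d/2 - 2}$ together with $\alpha d/2 - 2 > 0$ shows that the sign of $\mK(u)$ matches whether $t^{*}$ is less than, equal to, or greater than $1$, which is (iii). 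Combining (i) and (iv) then shows that $\mH(u^t)$ is strictly increasing on $(0,t^{*})$ and strictly decreasing on $(t^{*},\infty)$, yielding (v).

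There is no genuine obstruction to overcome — once the scaling identities are in hand, the entire argument is a one-variable calculus exercise. The only step worth flagging carefully is the use of the hypothesis $\alpha > 4/d$ in the monotonicity argument for the bracket in $g(t)$, since it is precisely this intercriticality assumption that produces the unique strictly positive root $t^{*}$, and hence the fibration structure of $S(c)$ over $V(c)$ that is exploited in the later variational and scattering arguments.
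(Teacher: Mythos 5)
Your proof is correct and is the natural one; the paper itself does not prove this lemma but cites it from \cite{Luo_inter}, and the argument there is the same elementary fibering analysis based on the scaling identities for $\|\nabla_x u^t\|_2^2$ and $\|u^t\|_{\alpha+2}^{\alpha+2}$ together with $\alpha d/2 - 2 > 0$. One small point worth making explicit: you need $a = \|\nabla_x u\|_2^2 > 0$, which indeed holds because any nonzero element of $H^1(\R^d\times\T)$ (guaranteed by $c>0$) cannot be independent of $x$, as an $x$-independent function cannot lie in $L^2(\R^d\times\T)$.
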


\begin{lemma}[Property of the mapping $c\mapsto m_c$, \cite{Luo_inter}]\label{monotone lemma}
The mapping $c\mapsto m_c$ is continuous and monotone decreasing on $(0,\infty)$.
\end{lemma}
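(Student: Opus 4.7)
The plan is to prove Lemma \ref{monotone lemma} in three stages: first, continuity of $c\mapsto m_c$ by manipulating the scalings already recorded in Lemma \ref{monotoneproperty} and Lemma \ref{lem tilde mc}; second, a local strict decrease in a right-neighborhood of each $c_1>0$, exploiting the Euler--Lagrange equation from Theorem \ref{thm existence of ground state}; third, globalization via a continuity/minimum argument.

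For continuity, given $c_n\to c_0$, I would transfer the minimizer $u_{c_0}$ of $m_{c_0}$ to a competitor at mass $c_n$ by amplitude rescaling $v_n:=\sqrt{c_n/c_0}\,u_{c_0}$. Using $K(u_{c_0})=0$, one computes $K(v_n)=\|\nabla_x u_{c_0}\|_2^2\bigl[(c_n/c_0)-(c_n/c_0)^{(\alpha+2)/2}\bigr]$, which has the sign opposite to $c_n-c_0$. When $c_n\geq c_0$, $K(v_n)\leq 0$ and Lemma \ref{lem tilde mc} gives $m_{c_n}\leq I(v_n)=(c_n/c_0)m_{c_0}\to m_{c_0}$; when $c_n<c_0$, $K(v_n)>0$ and one first applies the $x$-scaling projection $w_n:=v_n^{t_n}$ from Lemma \ref{monotoneproperty} onto $V(c_n)$, where a direct computation gives $t_n\to 1$ and hence $E(w_n)\to m_{c_0}$. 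This yields upper semi-continuity $\limsup m_{c_n}\leq m_{c_0}$. The dual construction, starting from minimizers $u_{c_n}$ of $m_{c_n}$ (whose $H^1$-norm is controlled uniformly from the identity $E(u)=\tfrac12\|\partial_y u\|_2^2+\tfrac{\alpha d-4}{2\alpha d}\|\nabla_x u\|_2^2$ valid on $V$, together with the just-obtained upper bound on $m_{c_n}$), delivers the matching lower semi-continuity.

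For local strict decrease, fix $c_1$ and let $u_1$ be the minimizer of $m_{c_1}$; by Theorem \ref{thm existence of ground state}, $u_1$ solves $-\Delta u_1+\beta_{c_1} u_1=u_1^{\alpha+1}$ with $\beta_{c_1}>0$. Testing against $u_1$ and using $K(u_1)=0$ (which gives $\|\nabla_x u_1\|_2^2=\tfrac{\alpha d}{2(\alpha+2)}\|u_1\|_{\alpha+2}^{\alpha+2}$) yields the Pohozaev-type identity
\[
\frac{4-(d-2)\alpha}{\alpha d}\,\|\nabla_x u_1\|_2^2-\|\partial_y u_1\|_2^2 \;=\; \beta_{c_1}\,c_1 \;>\; 0.
\]
The crucial construction is the combined scaling $v_\tau(x,y):=\tau\,u_1(\tau^{\alpha/2}x,y)$, which preserves $K(v_\tau)\equiv 0$ for all $\tau>0$ while $M(v_\tau)=\tau^{2-d\alpha/2}\,c_1$. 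For $c_2=sc_1$ with $s>1$, set $\tau:=s^{1/(2-d\alpha/2)}\in(0,1)$ so that $v_\tau\in V(c_2)$; Taylor-expanding $E(v_\tau)$ in $s-1$ and invoking the Pohozaev identity yields
\[
E(v_\tau)-m_{c_1} \;=\; -\tfrac{s-1}{2}\,\beta_{c_1}\,c_1+O\bigl((s-1)^2\bigr),
\]
so $m_{c_2}\leq E(v_\tau)<m_{c_1}$ whenever $s>1$ is sufficiently close to $1$.

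Global strict monotonicity then follows by combining the two ingredients: for $c_1<c_2$, continuity ensures that $m$ attains its infimum on $[c_1,c_2]$ at some $c^*\in[c_1,c_2]$; local strict decrease at $c^*$ precludes $c^*<c_2$, forcing $c^*=c_2$, which combined with local strict decrease at $c_1$ yields $m_{c_2}<m_{c_1}$. The main obstacle is the local strict decrease step: amplitude scaling alone only gives the sub-homogeneity $m_{\lambda^2 c}\leq\lambda^2 m_c$, and a pure $x$-dilation preserves mass while leaving $\|\partial_y u\|_2^2$ invariant, so neither alone produces a decrease in $c$. The joint scaling $v_\tau$ above simultaneously preserves $K=0$ and varies mass in the right direction, and the strictness of the decrease rests crucially on the positivity of the Lagrange multiplier $\beta_{c_1}>0$ from the ground state equation.
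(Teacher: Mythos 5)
Your proof is correct. I cannot compare it line by line with the original, since Lemma~\ref{monotone lemma} is quoted from \cite{Luo_inter} and not re-proved in this paper; but the argument you give is sound and uses exactly the ingredients one would expect such a proof to rest on, so the comparison below is against what the cited proof must essentially contain.

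The continuity step is carried out correctly: for $c_n\geq c_0$ the amplitude scaling gives a competitor with $\mK\leq 0$ and Lemma~\ref{lem tilde mc} immediately yields $m_{c_n}\leq (c_n/c_0)m_{c_0}$; for $c_n<c_0$ the extra projection $v_n^{t_n}$ onto $V(c_n)$ via Lemma~\ref{monotoneproperty} is needed, and your claim $t_n\to 1$ can be made fully explicit, since
\[
t_n^{\,\alpha d/2-2}=\frac{\|\nabla_x v_n\|_2^2}{\tfrac{\alpha d}{2(\alpha+2)}\|v_n\|_{\alpha+2}^{\alpha+2}}=(c_n/c_0)^{-\alpha/2},
\qquad\text{so}\qquad t_n=(c_0/c_n)^{\alpha/(\alpha d-4)}\to 1.
\]
The dual argument for lower semicontinuity correctly requires the uniform $H^1$-bound on the minimizers $u_{c_n}$, which you obtain from $\mI=\mH$ on $V(c)$ and the already-proved upper bound on $m_{c_n}$; without it the factor $(\tau_n^2-1)\|\nabla_x u_{c_n}\|_2^2$ cannot be dispatched.

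The decisive step is the combined scaling $v_\tau(x,y)=\tau\,u_1(\tau^{\alpha/2}x,y)$. You correctly observe that $K(v_\tau)=\tau^{\alpha+2-\alpha d/2}K(u_1)\equiv 0$ while $M(v_\tau)=\tau^{2-\alpha d/2}c_1$, so $v_\tau$ stays on the constraint manifold while the mass moves. The Nehari-type identity
\[
\frac{4-(d-2)\alpha}{\alpha d}\|\nabla_x u_1\|_2^2-\|\partial_y u_1\|_2^2=\beta_{c_1}c_1>0,
\]
obtained from testing $-\Delta_{x,y}u_1+\beta_{c_1}u_1=u_1^{\alpha+1}$ against $u_1$ and eliminating $\|u_1\|_{\alpha+2}^{\alpha+2}$ via $K(u_1)=0$, is exactly what makes $\tfrac{d}{d\tau}\mI(v_\tau)\big|_{\tau=1}=\tfrac{\alpha d-4}{4}\beta_{c_1}c_1>0$, and hence $\tfrac{d}{ds}E(v_{\tau(s)})\big|_{s=1}=-\tfrac12\beta_{c_1}c_1<0$. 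This is the correct mechanism: amplitude scaling alone only gives subhomogeneity $m_{\lambda^2 c}\leq\lambda^2 m_c$, and pure $x$-dilation preserves mass, so without the Euler--Lagrange information encoded in $\beta_{c_1}>0$ one cannot conclude the decrease. Your closing remark identifying this as the main obstacle is well taken.

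The globalization by a minimum argument is correct, though note it really does require the local \emph{strict} decrease to be available at \emph{every} point of $[c_1,c_2]$ (which you have, since Theorem~\ref{thm existence of ground state} guarantees a minimizer with $\beta_c>0$ at every mass), not merely at $c_1$; your phrasing already has this. One small caveat worth keeping in mind: since $g'(\tau)$ can change sign for $\tau$ bounded away from $1$, one genuinely cannot conclude global monotonicity along the single family $\{v_\tau\}$; the continuity + compactness bridge is not a formality but a necessary step of the proof.
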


We now follow the same lines as in \cite{InteractionCombined} to show the following coercivity result.

\begin{lemma}[A coercivity argument]\label{lem coercive}
Let $u$ be a solution of \eqref{nls} with $u(0)\in\mathcal{A}$. Then $u$ is global and $u(t)\in\mathcal{A}$ for all $t\in\R$. Moreover, there exist $0<\delta\ll 1$ and $R_0\gg 1$ such that for all $R\geq R_0$, $z\in \R^d$, $t\in\R$ we have
\begin{align}
\mK(\chi_R(\cdot-z)u^\xi(t))\geq \delta\|\nabla_x(\chi_R(\cdot-z))u^\xi(t)\|^2_{2},\label{5.2}
\end{align}
where $u^\xi(t,x,y):=e^{ix\cdot\xi}u(t,x,y)$ and
\begin{equation*}
\xi=\xi(t,z,R)=\left\{
\begin{array}{rr}
             -\frac{\int \mathrm{Im}(\chi_R^2(x-z)\bar{u}(t,x,y)\nabla_x u(t,x,y))\,dxdy}{\int \chi_R^2(x-z)|u(t,x,y)|^2\,dxdy}, &
             \text{if }\int \chi_R^2(x-z)|u(t,x,y)|^2\,dxdy\neq 0,\\
             0, &\text{if }\int \chi_R^2(x-z)|u(t,x,y)|^2\,dxdy=0.
             \end{array}
\right.
\end{equation*}
\end{lemma}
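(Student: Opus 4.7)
The plan is to prove the lemma in two stages: (i) the invariance of $\mathcal{A}$ under the flow together with global existence, and (ii) the quantitative coercivity estimate \eqref{5.2} via a Galilean-boost computation combined with a contradiction argument relying on Lemmas \ref{lem tilde mc}, \ref{monotoneproperty}, and \ref{monotone lemma}. For stage (i), I use conservation of $\mM(u)$ and $\mH(u)$ to preserve $\mH(u(t))<m_{\mM(u(0))}$. To propagate $\mK(u(t))>0$, I argue by continuity in $t$: were $\mK(u(t_0))=0$ at some first time $t_0$, then $u(t_0)\in V(\mM(u(0)))$ would force $\mH(u(t_0))\geq m_{\mM(u(0))}$, contradicting conservation of $\mH$. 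Global existence then follows from the uniform $H_{x,y}^1$-bound provided by $\mI(u(t))=\mH(u(t))-\frac{2}{\alpha d}\mK(u(t))<\mH(u(0))<m_{\mM(u(0))}$, since both coefficients in \eqref{def of mI} are strictly positive when $\alpha>4/d$ and $\|u\|_2$ is conserved.

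For stage (ii), I set $v_R:=\chi_R(\cdot-z)u^\xi(t)$ and observe that the prescribed $\xi$ is precisely the minimizer of $\xi\mapsto\|\chi_R\nabla_x u^\xi\|_2^2$: a direct expansion via $\nabla_x u^\xi=e^{ix\cdot\xi}(i\xi u+\nabla_x u)$ yields
\begin{align*}
\|\chi_R\nabla_x u^\xi\|_2^2=\int\chi_R^2|\nabla_x u|^2-|\xi|^2\int\chi_R^2|u|^2\leq\|\nabla_x u\|_2^2,
\end{align*}
while in the expansion of $\|\nabla_x v_R\|_2^2$ the cross term reduces to $-\frac{1}{2}\int\Delta_x(\chi_R^2)|u|^2=O(R^{-2})$ after integration by parts, and the boundary contribution $\|u\nabla_x\chi_R\|_2^2$ is likewise $O(R^{-2})$. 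Combined with the elementary bound $\|\pt_y v_R\|_2\leq\|\pt_y u\|_2$, this yields $\mI(v_R)\leq\mI(u(t))+O(R^{-2})\leq\mH(u(0))+O(R^{-2})$, where the last inequality uses $\mK(u(t))>0$ from stage (i).

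To derive \eqref{5.2}, I argue by contradiction: assume sequences $R_n\to\infty$, $z_n\in\R^d$, $t_n\in\R$ and $\delta_n\to 0^+$ with $\mK(v_n)<\delta_n\|\nabla_x v_n\|_2^2$ for the corresponding localizations $v_n$. If $\|\nabla_x v_n\|_2\to 0$ along a subsequence, then a waveguide Gagliardo-Nirenberg inequality (cf.~\cite{Luo_inter}) together with the uniform $H_{x,y}^1$-bound forces $\|v_n\|_{\alpha+2}^{\alpha+2}\lesssim\|\nabla_x v_n\|_2^{\alpha d/2}=o(\|\nabla_x v_n\|_2^2)$ since $\alpha d/2>2$, contradicting the choice of $\delta_n$. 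Otherwise $\|\nabla_x v_n\|_2\gtrsim 1$, and Lemma \ref{monotoneproperty} provides $t_n^*>0$ with $(v_n)^{t_n^*}\in V(c_n)$ where $c_n:=\mM(v_n)\leq\mM(u(0))$. The explicit form of $t_n^*$ together with $\mK(v_n)/\|\nabla_x v_n\|_2^2\to 0$ forces $t_n^*\to 1$, and the scaling identity for $\mH(v_n^t)$ then gives $\mH((v_n)^{t_n^*})=\mH(v_n)+o(1)\leq\mH(u(0))+o(1)<m_{\mM(u(0))}\leq m_{c_n}$ by Lemma \ref{monotone lemma}, contradicting $(v_n)^{t_n^*}\in V(c_n)\Rightarrow\mH((v_n)^{t_n^*})\geq m_{c_n}$. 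The principal obstacle is verifying that all $O(R^{-2})$ error terms together with the $\xi$-dependence remain uniform in $z$ and $t$; this is automatic because each reduces to integrated quantities dominated by the conserved $\mM$, $\mH$ and the uniform $H_{x,y}^1$-bound from stage (i).
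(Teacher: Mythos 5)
Your overall plan is sound and differs genuinely from the paper's: rather than the paper's direct two-case argument (splitting on whether $4\|\nabla_x\Theta\|_2^2-\frac{\alpha d(\alpha d+4)}{4(\alpha+2)}\|\Theta\|_{\alpha+2}^{\alpha+2}$ is nonnegative and, in the second case, integrating the differential inequality $(\mK(\Theta^t))'\leq -2\frac{d}{dt}\mH(\Theta^t)$ up to $t_0$ with $\mK(\Theta^{t_0})=0$), you run a contradiction/sequence argument that passes through $t_n^*\to1$ and $\mH((v_n)^{t_n^*})=\mH(v_n)+o(1)$. Stage (i) and the estimate $\mI(v_R)\leq\mI(u(t))+O(R^{-2})\leq\mH(u(0))+O(R^{-2})$ are correct and match the paper's \eqref{5.3}--\eqref{5.4}. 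The paper's route gives an explicit $\delta$ depending only on $\nu$, $\alpha$, $d$, $m_c$; yours only yields existence of $\delta$, which suffices for the lemma.

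However, there is a genuine gap in the second sub-case. You assert that the contradiction hypothesis $\mK(v_n)<\delta_n\|\nabla_x v_n\|_2^2$ implies $\mK(v_n)/\|\nabla_x v_n\|_2^2\to 0$, and hence $t_n^*\to1$. This is only a one-sided bound: nothing a priori prevents $\mK(v_n)$ from being negative and bounded away from zero (indeed $\|v_n\|_{\alpha+2}\lesssim1$ and $\|\nabla_x v_n\|_2\gtrsim1$ allow $\mK(v_n)\leq -c_0<0$), in which case $t_n^*<1$ stays bounded away from $1$ and your $o(1)$-transfer of the energy fails. The paper forecloses this by first deducing $\mK(\Theta)>0$ from $\mI(\Theta)\leq m_c-\nu/2$ together with Lemma \ref{lem tilde mc} and the monotonicity of $c\mapsto m_c$ (Lemma \ref{monotone lemma}): if $\mK(\Theta)\leq0$ then $\mI(\Theta)\geq\tilde m_{\mM(\Theta)}=m_{\mM(\Theta)}\geq m_c$, a contradiction. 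You already have the ingredient $\mI(v_n)\leq\mH(u(0))+O(R_n^{-2})<m_{c_n}$ in hand; inserting this one invocation of Lemma \ref{lem tilde mc} to rule out $\mK(v_n)\leq0$ closes the gap, after which $0<\mK(v_n)/\|\nabla_x v_n\|_2^2<\delta_n$ does force $t_n^*\to1^+$ via $t_n^{*\,\alpha d/2-2}=(1-\mK(v_n)/\|\nabla_x v_n\|_2^2)^{-1}$, and the remainder of your argument goes through.
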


\begin{proof}
By \cite[Lem. 4.15, Lem. 4.17]{Luo_inter} we know that $u(t)\in \mathcal{A}$ for all $t\in I_{\max}$ and $\sup_{t\in I_{\max}}\|u(t)\|_{H_{x,y}^1}<\infty$, where $I_{\max}$ is the maximal lifespan of $u$. Since \eqref{nls} is energy-subcritical, the uniform $H_{x,y}^1$-boundedness of $u$ implies that $u$ is global.

Next we prove \eqref{5.2}. Assume $\mH(u)=m_c-\nu$ for some $\nu>0$.
Using the product rules
\begin{align*}
\int|\nabla_{x}(\chi u)|^2\,dxdy&=\int\chi^2|\nabla_x u|^2\,dxdy-\int \chi \Delta_x \chi |u|^2\,dxdy,\\
\int|\pt_y(\chi u)|^2\,dxdy&=\int\chi^2|\pt_y u|^2\,dxdy
\end{align*}
we obtain
\begin{align*}
\int|\nabla_{x}(\chi u^\xi)|^2\,dxdy&=|\xi|^2\int\chi^2|u|^2\,dxdy+\int\chi^2|\nabla_xu|^2\,dxdy\nonumber\\
&-\int \chi \Delta_x \chi |u|^2\,dxdy+2\xi\cdot\int\mathrm{Im}(\chi^2 \bar{u}\nabla_x u)\,dxdy,\\
\int|\pt_y(\chi u^\xi)|^2\,dxdy&=\int\chi^2|\pt_y u|^2\,dxdy.
\end{align*}
Hence if $\int \chi_R^2(x-z)|u(t,x,y)|^2\,dxdy\neq 0$, we have
\begin{align}
\mI(\chi_R(x-z) u^\xi(t))&=\frac{1}{2}\int \chi_R^2(x-z)|\pt_y u(t)|^2\,dxdy+\bg(\frac{1}{2}-\frac{2}{\alpha d}\bg)\int \chi_R^2(x-z)|\pt_{x} u(t)|^2\,dxdy\nonumber\\
&-\frac{\bg(\int \mathrm{Im}(\chi_R^2(x-z)\bar{u}(t,x,y)\nabla_x u(t,x,y))\,dxdy\bg)^2}{\int \chi_R^2(x-z)|u(t,x,y)|^2\,dxdy}
\int \chi_R^2(x-z)|u(t)|^2\,dxdy\nonumber\\
&-\int \chi_R(x-z)\Delta_x(\chi_R(x-z))|u(t)|^2\,dxdy\nonumber\\
&\leq \mI(u(t))+O(R^{-2}).\label{5.3}
\end{align}
In the case $\int \chi_R^2(x-z)|u(t,x,y)|^2\,dxdy= 0$, we can similarly deduce \eqref{5.3}. Thus we choose $R_0\gg 1$ such that $O(R^{-2})\leq \frac{\nu}{2}$ for all $R\geq R_0$, which combining with $\mH(u)=m_c-\nu$ and $\mK(u(t))>0$ implies
\begin{align}
\mI(\chi_R(x-z)u^\xi(t))\leq \mI(u(t))+\frac{\nu}{2} =\mH(u(t))-\frac{2}{\alpha d}\mK(u(t))+\frac{\nu}{2}\leq m_c-\frac{\nu}{2}.\label{5.4}
\end{align}
Then by Lemma \ref{lem tilde mc} we must have $\mK(\chi_R(x-z)u^\xi(t))>0$.

In the following we set $\Theta:=\chi_R(x-z)u^\xi(t)$. We finish the remaining steps of the proof by discussing different cases.
\begin{itemize}
\item First we assume that
$$ 4\|\nabla_x \Theta\|_2^2-\frac{\alpha d(\alpha d+4)}{4(\alpha+2)}\|\Theta\|_{\alpha+2}^{\alpha+2}\geq 0.$$
Then
\begin{align}
\mK(\Theta)\geq\bg(1-\frac{8}{\alpha d+4}\bg) \|\nabla_x \Theta\|_2^2.
\end{align}
Since $\alpha>4/d$, we conclude that $1-\frac{8}{\alpha d+4}>0$ and the claim follows.

\item Next we consider the case
\begin{align}\label{hyp2}
4\|\nabla_x \Theta\|_2^2-\frac{\alpha d(\alpha d+4)}{4(\alpha+2)}\|\Theta\|_{\alpha+2}^{\alpha+2}< 0.
\end{align}
Define $f(t):=\mH(\Theta^t)$, where $\Theta^t$ is defined by \eqref{def of scaling op}. Direct calculation yields
\begin{align}
(\mK(\Theta^t))'&=(t f'(t))'=-2f'(t)+t\bg(4\|\nabla_x \Theta\|_2^2-t^{\frac{\alpha d}{2}-1}\frac{\alpha d(\alpha d+4)}{4(\alpha+2)}\|\Theta\|_{\alpha+2}^{\alpha+2}\bg)=-2f'(t)+th(t).
\end{align}
Using \eqref{hyp2} we know that $h(t)<0$ for all $t\in[1,\infty)$, thus
\begin{align}
(\mK(\Theta^t))'\leq -2f'(t)\quad\text{for all $t\in[1,\infty)$}.\label{5.7}
\end{align}
Since $\mK(\Theta)>0$, by Lemma \ref{monotone lemma} we can find some $t_0\in(1,\infty)$ such that $\mK(\Theta^{t_0})=0$. Moreover, since $\mM(\Theta)<\mM(u)=c$, by Lemma \ref{lem tilde mc} and \ref{monotone lemma} we have $\mI(\Theta^{t_0})\geq m_c$. Finally, integrating \eqref{5.7} yields
\begin{align}
\mK(\Theta)&\geq 2(\mH(\Theta^{t_0})-\mH(\Theta))=2\bg(\mI(\Theta^{t_0})-\mI(\Theta)-\frac{2}{\alpha d}\mK(\Theta)\bg),
\end{align}
which combining with $\mI(\Theta)\leq m_c-\frac{\nu}{2}$ implies
\begin{align}
\mK(\Theta)&\geq 2\bg(1+\frac{4}{\alpha d}\bg)^{-1}(\mI(\Theta^{t_0})-\mI(\Theta))
\geq \nu\bg(1+\frac{4}{\alpha d}\bg)^{-1}=
\frac{\nu}{m_c}\bg(1+\frac{4}{\alpha d}\bg)^{-1}m_c.\label{5.10}
\end{align}
But
\begin{align}
m_c>\mI(\Theta)=\frac{1}{2}\|\pt_y \Theta\|_2^2+\bg(\frac{1}{2}-\frac{2}{\alpha d}\bg)\|\nabla_x \Theta\|_2^2\geq
\bg(\frac{1}{2}-\frac{2}{\alpha d}\bg)\|\nabla_x \Theta\|_2^2.\label{5.11}
\end{align}
The desired claim then follows from \eqref{5.10} and \eqref{5.11}.
\end{itemize}
\end{proof}

We next derive the IMDM-identity in the waveguide setting.

\begin{lemma}[IMDM-identity]\label{lem morawetz identity}
Define
\begin{align}
M(t)&:=2\int|u(t,x_b,y_b)|^2\psi_R(x_a-x_b)(x_a-x_b)\nonumber\\
&\times\mathrm{Im}(\bar{u}(t,x_a,y_a)\nabla_x u(t,x_a,y_a))\,d(x_a,y_a)d(x_b,y_b),
\end{align}
where $\psi\in C_c^\infty(\R^d,[0,\infty))$ is the Morawetz potential defined in \cite{DodsonMurphyNonRadial} satisfying \cite[(4.1)-(4.2)]{DodsonMurphyNonRadial} and $\psi_R(x)=\psi(x/R)$. Then
\begin{align}
\frac{d}{dt}M(t)&=-4\int\pt_{x_{j}}(\mathrm{Im}(\bar{u}(t,x_b,y_b)\pt_{x_{j}} u(t,x_b,y_b)))\nonumber\\
&\times \psi_R(x_a-x_b)(x_{a,k}-x_{b,k})(\mathrm{Im}(\bar{u}(t,x_a,y_a)\pt_{x_{k}} u(t,x_a,y_a)))\,d(x_a,y_a)d(x_b,y_b)\label{5.13}
\\
\nonumber\\
&-4\int\pt_{x_{j}}|u(t,x_b,y_b)|\psi_R(x_a-x_b)(x_{a,j}-x_{b,j})\nonumber\\
&\times \pt_{x_{k}}(\mathrm{Re}(\pt_{x_{k}}\bar{u}(t,x_a,y_a)\pt_{x_{j}} u(t,x_a,y_a)))\,d(x_a,y_a)d(x_b,y_b)\label{5.14}\\
\nonumber\\
&+\int |u(t,x_b,y_b)|^2\psi_R(x_a-x_b)(x_a-x_b)\cdot\nabla_{x}\Delta_x(|u(t,x_a,y_a)|^2)\,d(x_a,y_a)d(x_b,y_b)\label{5.15}\\
\nonumber\\
&+\frac{2\alpha}{\alpha+2}
\int |u(t,x_b,y_b)|^2\psi_R(x_a-x_b)(x_a-x_b)\cdot\nabla_x (|u(t,x_a,y_a)|^{\alpha+2})\,d(x_a,y_a)d(x_b,y_b)\label{5.16}.
\end{align}
\end{lemma}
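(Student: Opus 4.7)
The plan is to differentiate $M(t)$ in time, substitute the equation \eqref{nls}, and exploit the fact that the Morawetz weight $\psi_R(x_a-x_b)(x_a-x_b)$ is independent of the torus variables $(y_a,y_b)$. Concretely, I first move $\frac{d}{dt}$ inside the integral and split the resulting expression into two summands, coming respectively from $\pt_t|u(t,x_b,y_b)|^2$ and from $\pt_t\mathrm{Im}(\bar u\pt_{x_k}u)(t,x_a,y_a)$.

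For the first summand I use the mass continuity equation $\pt_t|u|^2=-2\diver_{x,y}\mathrm{Im}(\bar u\nabla_{x,y}u)$, and for the second I use the $x$-momentum equation
\begin{align*}
\pt_t\mathrm{Im}(\bar u\pt_{x_k}u)=-2\pt_{x_j}\mathrm{Re}(\pt_{x_j}\bar u\pt_{x_k}u)-2\pt_y\mathrm{Re}(\pt_y\bar u\pt_{x_k}u)+\tfrac{1}{2}\pt_{x_k}\dxy|u|^2+\tfrac{\alpha}{\alpha+2}\pt_{x_k}|u|^{\alpha+2},
\end{align*}
both of which follow from the standard pointwise computation using $iu_t+\dxy u=-|u|^\alpha u$.

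Since the Morawetz weight depends only on $x_a-x_b$, every term carrying a $\pt_y$ or $\pt_y^2$ integrates to zero after the $y_a$- or $y_b$-integral over $\mathbb{T}$, so the full Laplacian $\dxy|u|^2$ collapses to $\Delta_x|u|^2$ and the two $\pt_y$-flux terms drop out outright. What remains is essentially the Euclidean IMDM-identity of \cite{DodsonMurphyNonRadial} applied slicewise in $x$, with a harmless extra integration over the compact tori. I then identify the four pieces: the mass-continuity contribution, after one integration by parts in $x_b^j$, yields exactly \eqref{5.13}; the Laplacian and nonlinear parts of the momentum equation give \eqref{5.15} and \eqref{5.16}; and the remaining momentum-flux piece produces \eqref{5.14} after using the polar decomposition $u=|u|e^{i\theta}$ together with the pointwise identity $\mathrm{Re}(\pt_{x_j}\bar u\pt_{x_k}u)=\pt_{x_j}|u|\,\pt_{x_k}|u|+|u|^2\pt_{x_j}\theta\,\pt_{x_k}\theta$ and a further integration by parts in $x_b^j$ to transfer derivatives onto the amplitude $|u(t,x_b,y_b)|$.

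The hard part will be this last algebraic step: producing the specific amplitude-gradient factor $\pt_{x_j}|u(t,x_b,y_b)|$ appearing in \eqref{5.14} out of the naive momentum-flux integrand, which a priori carries $|u(t,x_b,y_b)|^2$ intact. This requires a careful Morawetz-tensor manipulation via the polar decomposition (with the usual regularization near the zero set of $u$), closely paralleling the argument in \cite{DodsonMurphyNonRadial}. The other three pieces, together with the vanishing of all $y$-derivative contributions by periodicity of $\mathbb{T}$, follow directly from the conservation laws.
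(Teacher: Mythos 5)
Your proposal is correct and follows essentially the same strategy as the paper: differentiate $M(t)$ using the equation, then observe that every contribution involving $\pt_y$ or $\pt_y^2$ vanishes after the $y$-integration over $\T$ because the Morawetz weight is $y$-independent and $u$ is periodic in $y$. The only difference is organizational: the paper splits $\pt_t u = i(\Delta_x + |u|^\alpha)u + i\pt_y^2 u$, directly cites \cite[Lem.\ 4.3]{InteractionCombined} for the first piece (which reproduces \eqref{5.13}--\eqref{5.16} verbatim), and only verifies explicitly that the $i\pt_y^2 u$ piece contributes zero (their terms \eqref{5.17}--\eqref{5.19}); you instead plan to re-derive the Euclidean IMDM-identity from scratch via mass continuity, the momentum equation, and the polar-decomposition manipulation you flag as the hard step. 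That re-derivation is exactly the content of the cited lemma, so the shortcut would save you that work, but nothing in your route is wrong.
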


\begin{proof}
Since $u$ is a solution of \eqref{nls}, we know that
$$ \pt_t u=i(\Delta_{x}+|u|^\alpha)u+i\pt^2_y u.$$
An integration by parts corresponding to the part $i(\Delta_{x}+|u|^\alpha)u$ already yields the sum of \eqref{5.13} to \eqref{5.16}, see \cite[Lem. 4.3]{InteractionCombined}. It is left to show that the part corresponding to $i\pt^2_y u$ does not contribute. More precisely, we will show that the following sum
\begin{align}
-&\,4\int\mathrm{Im}(\bar{u}(t,x_b,y_b)\pt^2_y u(t,x_b,y_b))\nonumber\\
&\,\psi_R(x_a-x_b)(x_a-x_b)\cdot\mathrm{Im}(\bar{u}(t,x_a,y_a)\nabla_x u(t,x_a,y_a))\,d(x_a,y_a)d(x_b,y_b)
\label{5.17}\\
\nonumber\\
-&\,2\int|u(t,x_b,y_b)|^2\psi_R(x_a-x_b)(x_a-x_b)\cdot\mathrm{Re}(\pt_y^2\bar{u}(t,x_a,y_a)\nabla_x u(t,x_a,y_a))\,d(x_a,y_a)d(x_b,y_b)
\label{5.18}\\
\nonumber\\
+&\,2\int|u(t,x_b,y_b)|^2\psi_R(x_a-x_b)(x_a-x_b)\cdot\mathrm{Re}(\bar{u}(t,x_a,y_a)\pt_y^2\nabla_x u(t,x_a,y_a))\,d(x_a,y_a)d(x_b,y_b).\label{5.19}
\end{align}
is equal to zero. Since $\bar{u}\pt_y^2 u=\pt_y(\bar{u}\pt_y u)-|\pt_y u|^2$, $\eqref{5.17}=0$ follows from integration by parts, the periodic boundary condition of $u$ along the $y$-direction and the fact that $|\pt_y u|^2$ is real-valued and $\psi$ is independent of $y$. Another application of integration by parts yields $\eqref{5.18}+\eqref{5.19}=0$. This completes the proof.
\end{proof}

We point out that \eqref{5.13}-\eqref{5.16} coincide with \cite[(4.17)-(4.21)]{InteractionCombined}. Thus Lemma \ref{lem morawetz identity} enables us to directly integrate \cite[(4.23)]{InteractionCombined} over $\T_{y_a}\times\T_{y_b}$ to deduce the following IMDM-inequality, where we also need to replace \cite[Lem. 4.2]{InteractionCombined} to Lemma \ref{lem coercive}. Since the adaptation is straightforward, we omit the details of the proof.

\begin{lemma}[IMDM-inequality]\label{lem inter morawetz}
For any $\vare>0$ there exist $T_0=T_0(\vare)\gg 1$, $J=J(\vare)\gg 1$, $R_0=R_0(\vare,u_0)\gg 1$ and $\eta=\eta(\vare)\ll 1$ such that for any $a\in\R$ we have
\begin{align}
&\,\frac{1}{JT_0}\int_a^{a+T_0}\int_{R_0}^{R_0e^J}\frac{1}{R^d}\int_{(\R_{x_a}^d\times\T_{y_a})\times (\R_{x_b}^d\times\T_{y_b})\times\R^d_z}
\nonumber\\
&\qquad |\chi_R(x_b-z)u(t,x_b,y_b)|^2|\nabla_x(\chi_R(x_a-z)u^\xi(t,x_a,y_a))|^2d(x_a,y_a)d(x_b,y_b)dz\frac{dR}{R}dt\lesssim \vare.\label{5.12}
\end{align}
\end{lemma}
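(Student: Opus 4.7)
The plan is to adapt the Dodson--Murphy interaction Morawetz scheme of \cite{DodsonMurphyNonRadial,InteractionCombined} to the semiperiodic waveguide setting. The key conceptual simplification is already provided by Lemma \ref{lem morawetz identity}: the periodic direction $y$ contributes no nontrivial term to $\tfrac{d}{dt}M(t)$, so after integrating in $(y_a,y_b)\in\T\times\T$ the resulting monotonicity formula is structurally identical to the Euclidean one \cite[(4.17)--(4.21)]{InteractionCombined}, with $\int_\T|u(t,x,y)|^2\,dy$ essentially replacing the Euclidean squared modulus in the analysis.

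First I would time-integrate the IMDM identity on $[a,a+T_0]$, then apply the log-scale average $\frac{1}{J}\int_{R_0}^{R_0 e^J}\frac{dR}{R}$ and the normalized spatial average $\frac{1}{R^d}\int_{\R^d_z}dz$, and finally divide by $T_0$. By Cauchy--Schwarz and the uniform bound $\|u\|_{L_t^\infty H_{x,y}^1(\R)}\le A$, the boundary term $M(a+T_0)-M(a)$ is controlled by $C(A)R$ uniformly in $a$; combined with the $R^{-d}$ normalization and the averaging this produces an upper bound of order $(JT_0)^{-1}$, which can be made $\le\vare/2$ by choosing $J=J(\vare)$ and $T_0=T_0(\vare)$ sufficiently large. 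The $z$-integration in \eqref{5.12} enters through the standard Dodson identity for the Morawetz potential: schematically, the Hessian-type expressions $\psi_R(x_a-x_b)(x_a-x_b)$ and $\nabla_x\Delta_x\psi_R(x_a-x_b)$ appearing in \eqref{5.13}--\eqref{5.16} can be rewritten as $z$-averages of products $\chi_R^2(x_a-z)\chi_R^2(x_b-z)$ and their derivatives, modulo $R^{-2}$ remainders supported on annular regions in $z$.

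On the differentiated side, following \cite[Lem.~4.3]{InteractionCombined} and Lemma \ref{lem morawetz identity}, the sum of \eqref{5.13}--\eqref{5.16} decomposes, after $z$-averaging, into a positive kinetic quadratic form
\begin{align*}
\int_{\R^d_z}\!\!\int|u(t,x_b,y_b)|^2\bigl|\nabla_x(\chi_R(x_a-z)u^\xi(t,x_a,y_a))\bigr|^2\,d(x_a,y_a)d(x_b,y_b)dz,
\end{align*}
a negative cubic interaction piece proportional to $\int_{\R^d_z}\int|u(t,x_b,y_b)|^2\|\chi_R(\cdot-z)u^\xi(t)\|_{L_{x,y}^{\alpha+2}}^{\alpha+2}d(x_b,y_b)dz$, and error terms of order $R^{-2}$ coming from $\nabla_x\chi_R$ and $\Delta_x\chi_R$ that become negligible upon choosing $R_0=R_0(\vare,u_0)$ sufficiently large. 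Critically, the Galilean parameter $\xi=\xi(t,z,R)$ is exactly the one prescribed in Lemma \ref{lem coercive}: the modulation $u\mapsto u^\xi$ converts the raw current density $\mathrm{Im}(\bar u\nabla_x u)$ appearing in the Morawetz expression into the ``momentum-centered'' gradient $\nabla_x(\chi_R u^\xi)$, so the cubic coefficient matches exactly the one in $\mK$.

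The closing step is Lemma \ref{lem coercive}, applied pointwise in $(t,z)$ to the localized modulated density $\chi_R(\cdot-z)u^\xi(t)$: for $R\ge R_0$,
\begin{align*}
\|\nabla_x(\chi_R(\cdot-z)u^\xi(t))\|_2^2-\tfrac{\alpha d}{2(\alpha+2)}\|\chi_R(\cdot-z)u^\xi(t)\|_{\alpha+2}^{\alpha+2}\ge\delta\|\nabla_x(\chi_R(\cdot-z)u^\xi(t))\|_2^2.
\end{align*}
Integrating this against $|u(t,x_b,y_b)|^2$ and over $z$, the negative cubic piece is absorbed by a $(1-\delta)$-fraction of the kinetic piece, leaving a coercive lower bound equal to $\delta$ times the left-hand side of \eqref{5.12}. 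Matching this with the $(JT_0)^{-1}$-type upper bound and choosing successively $\eta\ll 1$, then $R_0\gg 1$ to kill the $O(R^{-2})$ remainders, and finally $J,T_0\gg 1$ to kill the $(JT_0)^{-1}$ factor, delivers \eqref{5.12}. The main obstacle is the careful bookkeeping of the various $R$-derivative error terms from $\nabla_x\chi_R$, $\Delta_x\chi_R$, and from the Dodson rewriting of $\psi_R$ into $z$-averaged cut-offs, to confirm they are $o_{R_0}(1)$ uniformly in $(a,t)$; a secondary subtlety is that $\xi$ varies with $z$, so the identity-level matching between the Morawetz current and $\mK(\chi_R(\cdot-z)u^\xi)$ must be verified pointwise in $z$ before integrating, which it does by the explicit formula in Lemma \ref{lem coercive}. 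The $y$-integrations themselves raise no new difficulty, since they commute with every operation and the $\pt_y^2 u$ contribution was already eliminated at the level of the Morawetz identity.
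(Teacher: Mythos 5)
Your proof follows exactly the paper's route: the paper notes that after Lemma \ref{lem morawetz identity} the waveguide IMDM identity reduces to the Euclidean one of \cite[(4.17)--(4.21)]{InteractionCombined} under integration over $\T_{y_a}\times\T_{y_b}$, and then invokes the Dodson--Murphy boundary-term / $z$-averaging / coercivity scheme of \cite{InteractionCombined} with Lemma \ref{lem coercive} substituted for their coercivity lemma, which is precisely the cascade you describe. The only imprecision is the boundary-term scaling, which after the $dR/R$-average over $[R_0,R_0e^J]$ comes out of order $R_0e^J/(JT_0)$ rather than $(JT_0)^{-1}$, but this is still made $\lesssim\vare$ by taking $T_0$ large relative to $R_0e^J/J$, so the conclusion is unaffected.
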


\section{Proof of Theorem \ref{main result}}

Having all the preliminaries we are in a position to prove Theorem \ref{main result}.

\begin{proof}[Proof of Theorem \ref{main result}]
Suppose that we are given some $\sigma>0$ which will be chosen sufficiently small later. By Lemma \ref{lem scattering criterion}, the claim follows as long as we can show that there exist $\vare=\vare(\sigma,A)$ sufficiently small and $T_0=T_0(\sigma,\vare,A)$ sufficiently large such that if for all $a\in\R$ there exists $T\in(a,a+T_0)$ such that $[T-\vare^{-\sigma},T]\subset(a,a+T_0)$ and
\begin{align}
\|u\|_{L_t^\ba L_x^\br H_y^s(T-\vare^{-\sigma},T)}\lesssim \vare^\mu
\end{align}
for some $\mu>0$. Let the notations of Lemma \ref{lem inter morawetz} be retained. Then by \eqref{5.12} we know that there exists some $R_1\in[R_0,R_0 e^J]$ such that
\begin{align}
&\,\frac{1}{T_0}\int_a^{a+T_0}\frac{1}{R_1^d}
\|\chi_{R_1}(\cdot-z)u(t)\|_{L_{x,y}^2}^2\|\nabla_x(\chi_{R_1}(\cdot-z)u^\xi(t))\|_{L_{x,y}^2}^2dzdt\lesssim \vare.
\end{align}
Writing $z=\frac{R_1}{4}(w+\theta)$ with $w\in \Z^d$ and $\theta\in[0,1]^d$ and using mean value theorem we infer that there exists $\theta_0:\Z^d\to [0,1]^d$ such that
\begin{align}
&\,\frac{1}{T_0}\int_a^{a+T_0}\sum_{w\in\Z^d}
\|\chi_{R_1}(\cdot-\frac{R_1}{4}(w+\theta_0))u(t)\|_{L_{x,y}^2}^2\|\nabla_x(\chi_{R_1}(\cdot-\frac{R_1}{4}(w+\theta_0))u^\xi(t))\|_{L_{x,y}^2}^2 dt\lesssim \vare.
\end{align}
By diving $[a+T_0/2,a+3T_0/4]$ into subintervals of length $\vare^{-\sigma}$ we know that there exists some $t_0\in[a+T_0/2,a+3T_0/4]$ such that $[t_0-\vare^{-\sigma},t_0]\subset[a,a+T_0]$ and
\begin{align}
&\,\int_{t_0-\vare^{-\sigma}}^{t_0}\sum_{w\in\Z^d}
\|\chi_{R_1}(\cdot-\frac{R_1}{4}(w+\theta_0))u(t)\|_{L_{x,y}^2}^2\|\nabla_x(\chi_{R_1}(\cdot-\frac{R_1}{4}(w+\theta_0))u^\xi(t))\|_{L_{x,y}^2}^2 dt\lesssim \vare^{1-\sigma}.
\end{align}
Next, from the modified Gagliardo-Nirenberg inequality on $\R^d$ (see \cite[Lem. 2.1]{DodsonMurphyNonRadial})
\begin{align}
\|u\|^2_{L_x^{\frac{2d}{d-1}}}\lesssim\|u\|_{L_x^{2}}\|\nabla_x u^\xi\|_{L_x^{2}}\label{6.5}
\end{align}
we deduce by combining H\"older and Minkowski that
\begin{align}
\|u\|^4_{L_x^{\frac{2d}{d-1}}L_y^2}\lesssim \|u\|^4_{L_y^2L_x^{\frac{2d}{d-1}}}
\lesssim \|u\|^2_{L_{x,y}^{2}}\|\nabla_x u^\xi\|^2_{L_{x,y}^{2}},
\end{align}
which in turn implies
\begin{align}
\int_{t_0-\vare^{-\sigma}}^{t_0}\sum_{w\in\Z^d}\|\chi_{R_1}(\cdot-\frac{R_1}{4}(w+\theta_0))u(t)\|^4_{L_x^{\frac{2d}{d-1}}L_y^2}\,dt
\lesssim \vare^{1-\sigma}.\label{6.7}
\end{align}
On the other hand, using H\"older, Cauchy-Schwarz and the embedding $H_x^1\hookrightarrow L_x^{\frac{2d}{d-2}}$ we infer that
\begin{align}
&\,\sum_{w\in\Z^d}\|\chi_{R_1}(\cdot-\frac{R_1}{4}(w+\theta_0))u(t)\|^2_{L_x^{\frac{2d}{d-1}}L_y^2}\nonumber\\
\lesssim&\,
\sum_{w\in\Z^d}\|\chi_{R_1}(\cdot-\frac{R_1}{4}(w+\theta_0))u(t)\|_{L_{x,y}^2}
\|\chi_{R_1}(\cdot-\frac{R_1}{4}(w+\theta_0))u(t)\|_{L_x^{\frac{2d}{d-2}}L_{y}^2}\nonumber\\
\lesssim&\,\sum_{w\in\Z^d}\|\chi_{R_1}(\cdot-\frac{R_1}{4}(w+\theta_0))u(t)\|_{L_{x,y}^2}
\|\chi_{R_1}(\cdot-\frac{R_1}{4}(w+\theta_0))u(t)\|_{H_{x}^{1}L_{y}^2}\nonumber\\
\lesssim&\, \|u(t)\|_{H_{x,y}^1}^2+O(R_1^{-2}\eta^{-2})\|u(t)\|_{L_x^2}^2\lesssim 1,
\end{align}
which in turn implies
\begin{align}
\int_{t_0-\vare^{-\sigma}}^{t_0}\sum_{w\in\Z^d}\|\chi_{R_1}(\cdot-\frac{R_1}{4}(w+\theta_0))u(t)\|^2_{L_x^{\frac{2d}{d-1}}L_y^2}\,dt
\lesssim \vare^{-\sigma}.\label{6.8}
\end{align}
Interpolating \eqref{6.7} and \eqref{6.8} yields
\begin{align}
&\,\|u\|^{\frac{2d}{d-1}}_{L_{t,x}^{\frac{2d}{d-1}}L_y^2(t_0-\vare^{-\sigma},t_0)}\lesssim
\int_{t_0-\vare^{-\sigma}}^{t_0}\sum_{w\in\Z^d}\|\chi_{R_1}(\cdot-\frac{R_1}{4}(w+\theta_0))u(t)\|^{\frac{2d}{d-1}}_{L_x^{\frac{2d}{d-1}}L_y^2}\,dt\nonumber\\
\lesssim&\,
\int_{t_0-\vare^{-\sigma}}^{t_0}\bg(\sum_{w\in\Z^d}
\|\chi_{R_1}(\cdot-\frac{R_1}{4}(w+\theta_0))u(t)\|^{4}_{L_x^{\frac{2d}{d-1}}L_y^2}\bg)^{\frac{1}{d-1}}
\nonumber\\
\times&\,\bg(\sum_{w\in\Z^d}
\|\chi_{R_1}(\cdot-\frac{R_1}{4}(w+\theta_0))u(t)\|^{2}_{L_x^{\frac{2d}{d-1}}L_y^2}\bg)^{\frac{d-2}{d-1}}\,dt
\nonumber\\
\lesssim&\,
\bg(\int_{t_0-\vare^{-\sigma}}^{t_0}\sum_{w\in\Z^d}
\|\chi_{R_1}(\cdot-\frac{R_1}{4}(w+\theta_0))u(t)\|^{4}_{L_x^{\frac{2d}{d-1}}L_y^2}\,dt\bg)^{\frac{1}{d-1}}
\nonumber\\
\times&\,\bg(\int_{t_0-\vare^{-\sigma}}^{t_0}\sum_{w\in\Z^d}
\|\chi_{R_1}(\cdot-\frac{R_1}{4}(w+\theta_0))u(t)\|^{2}_{L_x^{\frac{2d}{d-1}}L_y^2}\,dt\bg)^{\frac{d-2}{d-1}}
\lesssim \vare^{\frac{1}{d-1}-\sigma}.
\end{align}
Using $\br>\frac{\alpha(\alpha+1)d}{\alpha+2}$ from Lemma \ref{exotic strichartz} and $\alpha>4/d$ one easily verifies that $\br>\frac{2d}{d-1}$. For $\theta\in(0,1)$, we define $(\ba_\theta,\br_\theta,s_\theta,\tilde{s}_\theta)$ via
\begin{gather}
\frac{1}{\br}=\frac{\theta}{(2d/(d-1))}+\frac{1-\theta}{\br_\theta},\quad
\frac{1}{\ba}=\frac{\theta}{(2d/(d-1))}+\frac{1-\theta}{\ba_\theta},\\
s_{\alpha}=\frac{2-d}{2d}\theta+(1-\theta)s_\theta,\quad s=(1-\theta)\tilde{s}_\theta.
\end{gather}
Then $(\ba_\theta,\br_\theta)$ is $H^{s_\theta}$-admissible when $\theta$ is close to zero. We also define $\tilde{\br}_\theta$ such that $(\ba_\theta,\tilde{\br}_\theta)$ is $L^2$-admissible. Moreover,
$$ s_\theta+\tilde{s}_\theta=(1-\theta)^{-1}\bg(s_\alpha+s+\frac{d-2}{2d}\theta\bg)\to s_\alpha+s<1$$
as $\theta\to 0$. Thus by choosing $\theta\ll 1$ we have $s_\theta<1-\tilde{s}_\theta$. By continuity we also know that $\tilde{s}_\theta\in(\frac12,s_\alpha)$ for $\theta\ll 1$. Thus fix some $\theta$ sufficiently close to zero. Then by interpolation and Lemma \ref{local control} we infer that
\begin{align}
\|u\|_{L_t^\ba L_x^\br H_y^s(t_0-\vare^{-\sigma},t_0)}&\lesssim
\|u\|_{L_{t,x}^{\frac{2d}{d-1}} L_y^2(t_0-\vare^{-\sigma},t_0)}^\theta
\|u\|_{L_t^{\ba_\theta} L_x^{\br_\theta} H_y^{\tilde{s}_\theta}(t_0-\vare^{-\sigma},t_0)}^{1-\theta}\nonumber\\
&\lesssim
\|u\|_{L_{t,x}^{\frac{2d}{d-1}} L_y^2(t_0-\vare^{-\sigma},t_0)}^\theta
\|u\|_{L_t^{\ba_\theta} L_x^{\br_\theta} H_y^{\tilde{s}_\theta}(t_0-\vare^{-\sigma},t_0)}^{1-\theta}
\nonumber\\
&\lesssim
\vare^{\theta(\frac{1}{2d}-\frac{d-1}{2d}\sigma)}
\|u\|_{L_t^{\ba_\theta} W_x^{1-\tilde{s}_\theta,\tilde{\br}_\theta} H_y^{\tilde{s}_\theta}(t_0-\vare^{-\sigma},t_0)}^{1-\theta}
\lesssim
\vare^{\theta(\frac{1}{2d}-\frac{d-1}{2d}\sigma)}
\vare^{-\frac{\sigma(1-\theta)}{\ba_\theta}}.
\end{align}
The desired claim follows by choosing $\sigma$ small.
\end{proof}

\subsubsection*{Acknowledgements}
The author acknowledges the funding by Deutsche Forschungsgemeinschaft (DFG) through the Priority Programme SPP-1886 (No. NE 21382-1).



\end{document}